\begin{document}

\ensubject{fdsfd}

\ArticleType{ARTICLES}
\Year{2017}
\Month{January}%
\Vol{60}
\No{1}
\BeginPage{1} %
\DOI{10.1007/s11425-000-0000-0}
\ReceiveDate{January 1, 2017}
\AcceptDate{January 1, 2017}

\title[]{Characterization of Image Spaces of Riemann-Liouville Fractional Integral Operators on Sobolev Spaces $W^{m,p}(\Omega)$}
{Characterization of Image Spaces of Riemann-Liouville Fractional Integral Operators on Sobolev Spaces $W^{m,p}(\Omega)$}

\author[1,$\ast$]{Lijing Zhao}{{zhaolj17@nwpu.edu.cn}}
\author[2]{Weihua Deng}{dengwh@lzu.edu.cn}
\author[3]{Jan S. Hesthaven}{Jan.Hesthaven@epfl.ch}



\address[1]{School of Mathematics and Statistics, \\
Northwestern Polytechnical University, Xi'an {\rm 710129}, China}
\address[2]{School of Mathematics and Statistics, Gansu Key Laboratory of Applied Mathematics and Complex Systems, \\
Lanzhou University, Lanzhou {\rm 730000}, China}
\address[3]{EPFL-SB-MATHICSE-MCSS, \'{E}cole Polyt\'{e}chnique F\'{e}d\'{e}rale de Lausanne, Lausanne {\rm CH-1015}, Switzerland}

\abstract{Fractional operators are widely used in mathematical models describing abnormal and nonlocal phenomena. Although there are extensive numerical methods for solving the corresponding model problems, theoretical analysis such as the regularity result, or the relationship between the left-side and right-side fractional operators are seldom mentioned. In stead of considering the fractional derivative spaces, this paper starts from discussing the image spaces of Riemann-Liouville fractional integrals of $L_p(\Omega)$ functions, since the fractional derivative operators that often used are all pseudo-differential. Then high regularity situation---the image spaces of Riemann-Liouville fractional integral operators on $W^{m,p}(\Omega)$ space are considered. Equivalent characterizations of the defined spaces, as well as of the intersection of the left-side and right-side spaces are given. The behavior of the functions in the defined spaces at both the nearby boundary point/ponits and the points in the domain are demonstrated in a clear way. Besides, tempered fractional operators show to be reciprocal to the corresponding Riemann-Liouville fractional operators, which is expected to make some efforts on theoretical support for relevant numerical methods. Last, we also provide some instructions on how to take advantage of the introduced spaces when numerically solving fractional equations.}

\keywords{image spaces, Riemann-Liouville integral, regularity property, approximation}

\MSC{26A33, 46E30, 34A08, 34A45}

\maketitle

\section{Introduction}\label{section:1}

The concept of fractional derivatives are almost as old as their more familiar integer order counterparts. The notion of fractional calculus goes back to the Leibniz's note in his list to L'Hospital
dated Sep. 30, 1695~\cite{Butzer:00,Kenneth:93,Podlubny:99}.
Until recently, however, fractional derivatives have been widely and successfully explored as a tool for developing more sophisticated mathematical models. During the last decades, fractional calculus
has gained widely attention in the development of the
mathematical models in statistical physics, mathematical finance,
polymer modeling; fractional dynamics are also
disclosed, such as, anomalous diffusion, synchronization of chaos,
and multi-directional multi-scroll attractors
~\cite{Bouchaud:90,Metzler:00}.

At the same time, there are numerous numerical methods have been proposed to solve temporal as well as spatial fractional problems, such as finite difference methods~\cite{Meerschaert:04,Deng:07d,Tian:12,Zhao:14,Alikhanov:15,Zhao:16,Jin:17,Zeng:18,Deng:19}, finite element methods~\cite{Ervin:06,Deng:08,Ervin:16}, spectral methods~\cite{Li:09,Tian:14,Wang:15,Zayernouri:15,Chen:16,Jiao:16,Mao:18}, Monte Carlo approach~\cite{Magdziarz:07,Kyprianou:18}, and so on.

Generally speaking, high order method requires high regularity of the solution, most of time, the numerical methods are designed under the assumption that the solution of the fractional problem is smooth. However, the regularity results about the fractional operators are seldom mentioned in a clear way. Mostly, the solution is assumed in a space, say fractional derivative space, which are defined as the closure of $C_0^\infty(\Omega)$ under the norm of fractional Sobolev spaces on $\mathbb{R}$. However, since fractional operators
are not only pseudo-differential, but also non-local, the singularity/singularites at the original or boundary points indeed exist. Whether or how this influences or pollutes the whole domain has seldom been spoke of. In general,
there are four questions lie in:

1. Under which framework the ``smooth" should be defined for fractional operators.

2. What is the regularity property of the fractional operators under the above framework.

3. How to describe clearly the singularity/singularites behavior near the original or boundary points.

4. How do/does the singularity/singularites affect the whole domain.

5. What is the relationship between the left-side and right-side fractional operators.

Actually, the theoretical results about fractional operators have been studied in the early days~\cite{Miller:93,Oldham:74,Podlubny:99,Samko:93}. In this paper, we collect (not simply copy but sometimes have to
flip through pages, relabeling and rearrangement the content) some concepts that introduced in~\cite{Samko:93}, especially the image space of Riemann-Liouville fractional integrals of $L_p$ functions, and give some elaboration on it.
The reason we choose this typical space, not only because it comes from a ``non state of the art" references, but also because the key difficulty of the common used fractional derivative operators, defined such as in Riemann-Liouville or Caputo way, is actually come from the pseudo-differential or fractional integral operator in them. While the image space of fractional integrals can catch this characteristic very well. So it is a natural way to begin our discussion from it. After extending the space introduced in~\cite{Samko:93} to the image spaces of Riemann-Liouville fractional integral operators on $W^{m,p}(\Omega)$ functions, this paper targets on answering the above listed five questions, and also provide some instructions on how to apply these spaces into numerically solving fractional equations.

The rest of this paper is organized as follows. In Section \ref{section:2}, we collect the properties of the image spaces of Riemann-Liouville fractional integrals of $L_p(\Omega)$ functions, then make a further discussion about the spaces, and show some very useful results, including the equivalent representation of the intersection of left-side and right-side spaces.
In Section \ref{section:3}, we extend the space to high regularity case, and discuss the image spaces of fractional integrals on Sobolev space $W^{m,p}(\Omega)$. We can see that the functions in the defined spaces consist of two parts: one is smooth in the domain but possesses specific regularity at the boundary point; another one shows the regularity in the domain and the asymptotic behavior tending to zero near the boundary point. The regularity results and approximation properties are also demonstrated. In Section \ref{section:4}, we show a brief instruction on how to make used of the introduced spaces on solving fractional problems.
Finally, the main results are summarized in Section \ref{section:5}.

\section{Image Space of Riemann-Liouville Integrals on $L_p(\Omega)$ Space}\label{section:2}

The concept of fractional integrals of functions in $L_p(\Omega)$ have been proposed and studied by Samko et al. in~\cite{Samko:93}, the set of which actually can be normed as a space, which we denote in this paper as $I_{a+}^{\alpha}[L_p(\Omega)]$ or $I_{b-}^{\alpha}[L_p(\Omega)]$. In this section, we collect some properties of this kind of spaces, and give a further discussion about them. Based on these, in next section, the image spaces are extended to  high regularity cases, i.e., $I_{a+}^{\alpha}[W^{m,p}(\Omega)]$ and $I_{b-}^{\alpha}[W^{m,p}(\Omega)]$, as well as their interaction $I_{a+}^{\alpha}[W^{m,p}(\Omega)]\bigcap I_{b-}^{\alpha}[W^{m,p}(\Omega)]$, which are fundamentally different from the existing spaces.
\subsection{Definitions}\label{subsection:2.1}

We denote by $L_p(\Omega)~ (1\le p<\infty)$ the set of Lebesgue measurable functions
$f$ on $\Omega=[a,b]$ (a \textbf{closed} interval) for which $\|f\|_{L_p}<\infty$, where $\|f\|_{p}=\left(\int_{\Omega} |f(x)|^p dx\right)^{1/p}$, $1\le p<\infty$.

Let $AC(\Omega)$ be the space of functions $f$ which are absolutely continuous on $\Omega$.
It is known that $AC(\Omega)$ coincides with the space of primitives of
Lebesgue summable functions (\cite{Kolmogorov:68}, p. 338):
\begin{equation}\label{equation:2.1.1}
f(x)\in AC(\Omega)\Leftrightarrow f(x)=c+\int_{a}^x \phi(x)dx,~~~\phi(x)\in L_1(\Omega),
\end{equation}
where $\phi(x)=f'(x)$, and $c=f(a)$.

Denote by $AC^n(\Omega)$ the space of functions $f(x)$ which have continuous derivatives up to
order $(n-1)$ on $\Omega$ such that $f^{(n-1)}(x)\in AC(\Omega)$:
\begin{equation}\label{equation:2.1.2}
AC^n(\Omega)=\{f:D^{n-1}f(x)\in AC(\Omega)\}.
\end{equation}
In particular, $AC^1(\Omega)=AC(\Omega)$.

Let $u(x)\in L_1(\Omega)$. Denote $_{a}I_{x}^{\beta}$ and $_{x}I_b^{\beta}$ separately as the $\alpha$-th order left-side Riemann-Liouville fractional integral operator
$$_{a}I_{x}^{\beta}u(x):=\frac{1}{\Gamma(\beta)}\int_{a}^x(x-s)^{\beta-1}u(s)ds,$$
and right-side Riemann-Liouville fractional integral operator
$$_{x}I_b^{\beta}u(x):=\frac{1}{\Gamma(\beta)}\int_{x}^b(s-x)^{\beta-1}u(s)ds.$$
Here $\Gamma(\cdot)$ presents the Euler gamma function.

Now we can introduce the set of $\alpha$-th order left-side and right-side Riemann-Liouville fractional integrals of $L_p(\Omega)$ functions, $1\le p<\infty$.
\begin{definition}\label{defi_frac_int_space}
Let $\alpha>0$. We call
\begin{equation}\label{defi_left}
I_{a+}^\alpha[L_p(\Omega)]:=\left\{f:f(x)={}_{a}I_{x}^\alpha \varphi(x), \varphi(x)\in L_p(\Omega), x\in \Omega\right\},
\end{equation}
and
\begin{equation}\label{defi_right}
I_{b-}^{\alpha}[L_p(\Omega)]:=\left\{f:f(x)={}_{x}I_{b}^\alpha \varphi(x), \varphi(x)\in L_p(\Omega), x\in \Omega\right\},
\end{equation}
as the sets of $\alpha$-th order left-side and right-side Riemann-Liouville integrals of $L_p(\Omega)$ functions, respectively.
\end{definition}

Now we only discuss $I_{a+}^\alpha[L_p(\Omega)]$; similar results can be derived for $I_{b-}^{\alpha}[L_p(\Omega)]$.

The following lemma shows that the sets defined above indeed make sense.

\begin{lemma}\label{lemma:2.1.1}
If $u(x)\in I_{a+}^\alpha[L_p(\Omega)]$, then there exists a unique $\varphi(x)\in L_p(\Omega)$, such that
$u(x)={}_{a}I_{x}^\alpha \varphi(x)$.
\end{lemma}
\begin{proof}
If there are two functions $\varphi_1(x)$ and $\varphi_2(x)$ in $L_p(\Omega)$, such that
$$u(x)={}_{a}I_{x}^\alpha \varphi_1(x)={}_{a}I_{x}^\alpha \varphi_2(x),$$
Suppose $n-1\leq \alpha<n$. Denote $\alpha'=\alpha-(n-1)\in[0,1)$.
Then $$D^{n-1}{}_{a}I_{x}^\alpha[\varphi_1(x)-\varphi_2(x)]
={}_{a}I_{x}^{\alpha'}[\varphi_1(x)-\varphi_2(x)]=0,$$
i.e.,
$$\frac{1}{\Gamma(\alpha')}\int_{a}^{x}(x-s)^{\alpha'-1} (\varphi_1(s)-\varphi_2(s))\,ds= 0.$$
This is actually an Abel's equation~\cite{Gakhov:66}. By the exsitance and uniqueness of the solution, we have
$$\varphi_1(x)-\varphi_2(x)=0 \textrm{~a.e.~on~}\Omega=[a,b],$$
i.e., $\varphi_1(x)=\varphi_2(x)$ in $L_p(\Omega)$.
\end{proof}

Since for $p\geq 1$,
$I_{a+}^\alpha[L_p(\Omega)]\hookrightarrow L_p(\Omega)$(Theorem 2.6 in\cite{Samko:93}), by Lemma \ref{lemma:2.1.1}, $I_{a+}^\alpha:L_p(\Omega)\rightarrow L_p(\Omega)$ is an injection. So
we can introduce the norm in $I_{a+}^\alpha[L_p(\Omega)]$ by
\begin{equation}\label{norm}
\|u(x)\|_{I_{a+}^\alpha[L_p(\Omega)]}:=\|\varphi(x)\|_{L_p(\Omega)},
\end{equation}
where $u(x)={}_{a}I_{x}^\alpha \varphi(x)$.

Now we can see that $I_{a+}^\alpha[L_p(\Omega)]$ with the norm
(\ref{norm}) is a Banach space as it is isometric to $L_p(\Omega)$.

\subsection{Properties}\label{subsection:2.2}

\subsubsection{Characteristics}\label{subsection:2.2.1}

One of the characteristics of the space $I_{a+}^\alpha[L_1(\Omega)]$ is given by the following lemma.
\begin{lemma}[\rm~\cite{Samko:93}, Theorem 2.3]\label{lemma:2.2.1.1}
In order that $u(x)\in I_{a+}^\alpha[L_1(\Omega)]$, $n-1\le \alpha <n$, it is necessary and
sufficient that
$${}_{a}I_{x}^{n-\alpha}u(x)\in AC^n(\Omega),$$
and
\begin{equation}\label{equation:2.2.4}
{}^{RL}_{a}D_{x}^{\alpha-k}u(x)\big|_{x=a}:=D^{n-k}{}_{a}I_{x}^{n-\alpha}u(x)\big|_{x=a}=0,~~k=1,2,\cdots,n.
\end{equation}
In particular, when $0<\alpha<1$, then $u(x)\in I_{a+}^\alpha[L_1(\Omega)]$ is equivalent to
$${}_{a}I_{x}^{1-\alpha}u(x)\in AC(\Omega),$$
and
$${}_{a}I_{x}^{1-\alpha}u(x)\big|_{x=a}=0.$$
\end{lemma}

For simplicity, in the rest of this paper, we replace ${}^{RL}_{a}D_{x}^{\alpha}u(x):=D^{n}{}_{a}I_{x}^{n-\alpha}u(x)$ with ${}_{a}D_{x}^{\alpha}$, and replace ${}^{RL}_{x}D_{b}^{\alpha}u(x):=(-1)^n\,D^{n}{}_{x}I_{b}^{n-\alpha}u(x)$ with ${}_{x}D_{b}^{\alpha}$, for short.

\begin{remark}\label{remark:3}
From Lemma \ref{lemma:2.2.1.1}, we can see that
\begin{enumerate}
\renewcommand{\labelenumi}{(\roman{enumi})}
\item
when $0<\alpha<1$, $u(x)\in {}_{a}I_{x}^{\alpha}[L_1(\Omega)]$
\textbf{does not need to be zero} at the left boundary. In fact, $u(a)$ is permitted to be a constant
or even infinite. For instance, assume $0<\alpha\le\alpha'<1$,
and $u(x)=(x-a)^{\alpha-\alpha'}$, then
$$
{}_{a}I_{x}^{1-\alpha}u(x)=\frac{\Gamma(\alpha-\alpha'+1)}{\Gamma(2-\alpha')}(x-a)^{1-\alpha'}
\in AC(\Omega),
$$
and $${}_{a}I_{x}^{1-\alpha}u(x)\big|_{x=a}=0.$$
Thus, by Lemma \ref{lemma:2.2.1.1}, $u(x)\in I_{a+}^{\alpha}[L_1(\Omega)]$.
While
$$u(x)\neq0 \textrm{~as~} x\rightarrow a.$$

\item
if $n-1\leq \alpha<n$, then for $k=1,2,\cdots,n$,
$$(x-a)^{\alpha-k}\notin I_{a+}^\alpha[L_1(\Omega)],$$
since by~\cite{Podlubny:99},
$${}_{a}D_{x}^{\alpha-k}(x-a)^{\alpha-k}\equiv\Gamma(\alpha-k+1)\neq 0.$$
Actually, from the definition of the operator ${}_{a}I_{x}^{\alpha}$, there is
$$(x-a)^{\alpha-1}=\Gamma(\alpha)\cdot{}_{a}I_{x}^{\alpha}\delta(x-a).$$
\item
for fixed $p\geq 1$, if $0<\alpha<\frac{1}{p}$, then $(x-a)^{-\alpha}\in L_p(\Omega)$, and
$$1=\frac{1}{\Gamma(1-\alpha)}\cdot{}_{a}I_{x}^{\alpha}(x-a)^{-\alpha},$$
thus,
$$1\in I_{a+}^\alpha[L_p(\Omega)],\quad 0<\alpha<\frac{1}{p}.$$
Thereby, given $p\geq 1$, and $\beta>-\frac{1}{p}$, then
$$(x-a)^{\beta}\in I_{a+}^\alpha[L_p(\Omega)],\quad 0<\alpha<\beta+\frac{1}{p}.$$
\end{enumerate}
\end{remark}

\begin{remark}\label{remark:1}
Here one fact need to be clarified: if $u(x)\in I_{a+}^\alpha[L_1(\Omega)]$, $n-1\le \alpha <n$,
then by the definitions of $AC(\Omega)$ as well as $AC^n(\Omega)$ in (\ref{equation:2.1.1})-(\ref{equation:2.1.2}), and Lemma \ref{lemma:2.2.1.1}, there is
${}_{a}D_{x}^{\alpha}u(x)=D^n {}_{a}I_{x}^{n-\alpha}u(x)\in L_1(\Omega).$

While on the other hand, ${}_{a}D_{x}^{\alpha}u(x)\in L_1(\Omega)$ does not mean ${}_{a}I_{x}^{n-\alpha}u(x)\in AC^n(\Omega)$ (since $f'(x)$ being summable \textbf{cannot ensure} that Newton-leibniz's formula $\int_{a}^{x}f'(x)\,dx=f(x)-f(a)$ holds). Consequently, by Lemma \ref{lemma:2.2.1.1}, $u(x)\in I_{a+}^\alpha[L_1(\Omega)]$ \textbf{might not be true}.

In other words,
$$u(x)\in I_{a+}^\alpha[L_1(\Omega)]\overset{\nLeftarrow}\Rightarrow {}_{a}D_{x}^{\alpha}u(x)\in L_1(\Omega).$$

For example, for $n-1\leq \alpha<n$, let $u(x)=(x-a)^{\alpha-k}$, where $k\leq n$ is a positive integer. Then,
$${}_{a}D_{x}^{\alpha}u(x)=D^n{}_{a}I_{x}^{n-\alpha}u(x)
=\frac{\Gamma(\alpha-k+1)}{\Gamma(n-k+1)}D^n[(x-a)^{n-k}]=0\in L_1(\Omega).$$
However, $u(x)\notin I_{a+}^\alpha[L_1(\Omega)]$, as we mentioned in Remark \ref{remark:3}. So, $I_{a+}^\alpha[L_1(\Omega)]$ is different from the exist fractional derivative spaces on $\Omega$ defined as the closure of $C_0^{\infty}$ under the norms of the fractional derivative spaces on $\mathbb{R}$.
\end{remark}

\begin{remark}\label{remark:4}
Recall the following formulas (\cite{Askey:75}, p.20):

$$\,_{-1}I_{x}^{\alpha}\big((1+x)^{\delta} J_{n}^{\gamma,\delta}(x)\big)
=\frac{\Gamma(n+\delta+1)}{\Gamma(n+\delta+\alpha+1)}
(1+x)^{\delta+\alpha} J_{n}^{\gamma-\alpha,\delta+\alpha}(x),$$
$$\,_{x}I_{1}^{\alpha}\big((1-x)^{\delta} J_{n}^{\delta,\gamma}(x)\big)
=\frac{\Gamma(n+\delta+1)}{\Gamma(n+\delta+\alpha+1)}
(1-x)^{\delta+\alpha} J_{n}^{\delta+\alpha,\gamma-\alpha}(x),$$
where $\alpha>0$, $\delta>-1$, $\gamma\in\mathbb{R}$, and $J_{n}^{\sigma,\eta}(x)$ denote the Jacobi polynomials. These functions are always used in spectral methods for solving fractional problems~\cite{Tian:14,Zayernouri:15}. In fact, since both $(1+x)^{\delta} J_{n}^{\gamma,\delta}(x)$ and $(1-x)^{\delta} J_{n}^{\delta,\gamma}(x)$
belong to $L_1([-1,1])$, we can see that
$$(1+x)^{\delta+\alpha} J_{n}^{\gamma-\alpha,\delta+\alpha}(x)\in I_{(-1)+}^{\alpha}[L_1([-1,1])],$$
and
$$(1-x)^{\delta+\alpha} J_{n}^{\delta+\alpha,\gamma-\alpha}(x)\in I_{1-}^{\alpha}[L_1([-1,1])].$$

\end{remark}

\subsubsection{Properties}\label{subsection:2.2.2}

The key property of the space $I_{a+}^\alpha[L_1(\Omega)]$ is listed in the following lemma.

\begin{lemma}\label{lemma:2.2.2.1}
If $u(x)\in I_{a+}^\alpha[L_1(\Omega)]$, then 
\begin{equation}\label{equation:2.2.5}
{}_{a}I_{x}^{\alpha}\,{}_{a}D_{x}^{\alpha}u(x)=u(x), \quad x\in \Omega.
\end{equation}
\end{lemma}
\begin{remark}
A parallel result can also see in Theorem 2.22 of~\cite{Diethelm:10}.
\end{remark}
\begin{proof}
Firstly, the equality 
\begin{equation}\label{equation:2.2.6}
{}_{a}D_{x}^{\alpha}{}_{a}I_{x}^{\alpha}\varphi(x)=\varphi(x), \quad x\in \Omega,
\end{equation}
is valid for any summable function $\varphi(x)$. This is because, since $\varphi(x)\in L_1(\Omega)$,
${}_{a}I_{x}^{\alpha}\varphi(x)\in I_{a+}^\alpha[L_1(\Omega)]$. Thus by Lemma \ref{lemma:2.2.1.1},
$${}_{a}I_{x}^{n-\alpha}\left[{}_{a}I_{x}^{\alpha}\varphi(x)\right]\in AC^{n}(\Omega).$$
Therefore, by the definitions of $AC(\Omega)$ as well as $AC^n(\Omega)$ in (\ref{equation:2.1.1})-(\ref{equation:2.1.2}),
$${}_{a}D_{x}^{\alpha}{}_{a}I_{x}^{\alpha}\varphi(x)
={}_{a}D_{x}^{n}{}_{a}I_{x}^{n-\alpha}\left[{}_{a}I_{x}^{\alpha}\varphi(x)\right]
={}_{a}D_{x}^{n}\left[{}_{a}I_{x}^{n}\varphi(x)\right]=\varphi(x).$$

Secondly, if $u(x)\in I_{a+}^\alpha[L_1(\Omega)]$, there exists $\varphi(x)\in L_1(\Omega)$, such that
$u(x)={}_{a}I_{x}^{\alpha}\varphi(x)$. Then by Eq. (\ref{equation:2.2.6}),
$${}_{a}I_{x}^{\alpha}{}_{a}D_{x}^{\alpha}u(x)
={}_{a}I_{x}^{\alpha}\left[{}_{a}D_{x}^{\alpha}{}_{a}I_{x}^{\alpha}\varphi(x)\right]
={}_{a}I_{x}^{\alpha}\varphi(x)=u(x).$$
\end{proof}

Now Lemma \ref{lemma:2.1.1} can be rewritten as:

If $u(x)\in I_{a+}^\alpha[L_p(\Omega)]$, $1\leq p<\infty$, then there exists a unique $\varphi(x)\in L_p(\Omega)$, such that
$u(x)={}_{a}I_{x}^\alpha \varphi(x)$, where $\varphi(x)={}_{a}D_{x}^\alpha u(x)$.

The norm in (\ref{norm}) can be redefined as:
\begin{equation}\label{norm_redefined}
\|u(x)\|_{I_{a+}^\alpha[L_p(\Omega)]}:=\|{}_{a}D_{x}^\alpha u(x)\|_{L_p(\Omega)}.
\end{equation}

\begin{remark}\label{remark:4}

If $u(x)\in I_{a+}^{\alpha}[L_1(\Omega)]$, then there exist a unique $\varphi(x)\in L_1(\Omega)$, such that $u(x)={}_{a}I_{x}^{\alpha} \varphi(x)$. Using Lemma \ref{lemma:2.2.2.1}, we have
\begin{equation}\label{weak_solution}
\begin{array}{lll}
&&\int_a^b u(x)\cdot {}_{x}D_{b}^{\alpha}\phi(x)\,dx\vspace{3pt}\\
&=&\int_a^b {}_{a}I_{x}^{\alpha} \varphi(x)\cdot {}_{x}D_{b}^{\alpha}\phi(x)\,dx\vspace{3pt}\\
&=&\int_a^b \varphi(x)\cdot {}_{x}I_{b}^{\alpha}{}{}_{x}D_{b}^{\alpha}\phi(x)\,dx\vspace{3pt}\\
&=&\int_a^b \varphi(x)\phi(x)\,dx \qquad \qquad \qquad\forall \phi(x)\in C^{\infty}_{c}(a,b),
\end{array}
\end{equation}
where the integrals make sense because of the H\"{o}lder inequality $\|fg\|_{L_1}\leq \|f\|_{L_1}\cdot\|g\|_{L_{\infty}}$. So, $\varphi(x)={}_{a}D_{x}^{\alpha}u(x)$ can be viewed as weak $\alpha$-th derivative of $u(x)$, and $I_{a+}^{\alpha}[L_1(\Omega)]$ is a Sobolev space.

Because $I_{a+}^{\alpha}[L_p(\Omega)]\subseteq I_{a+}^{\alpha}[L_1(\Omega)]$, $p\geq 1$, so, the discussions above $I_{a+}^{\alpha}[L_1(\Omega)]$ can be inherited by $I_{a+}^{\alpha}[L_p(\Omega)]$, $p\geq 1$. Therefore, we can say that $I_{a+}^{\alpha}[L_p(\Omega)]$ is a Sobolev space.

Since for $p\geq 1$,
$I_{a+}^{\alpha}[L_p(\Omega)]\hookrightarrow L_p(\Omega)$(Theorem 2.6 in\cite{Samko:93}), and (similar to Poincar\'{e} inequality\cite{Ervin:06})
\begin{equation*}
\|u(x)\|_{L_p(\Omega)}=\|{}_{a}I_{x}^{\alpha}\varphi(x)\|_{L_p(\Omega)}\le
\frac{(b-a)^\alpha}{\Gamma(\alpha+1)}
\|\varphi(x)\|_{L_p(\Omega)}
=\frac{(b-a)^\alpha}{\Gamma(\alpha+1)}
\|{}_{a}D_{x}^{\alpha}u(x)\|_{L_p(\Omega)},
\end{equation*}
the norm in (\ref{norm_redefined}) is indeed well-defined for $I_{a+}^{\alpha}[L_p(\Omega)]$ being as a Sobolev space.
\end{remark}

Based on Lemma \ref{lemma:2.2.2.1}, two corollaries can be obtained.
\begin{corollary}\label{corollary:2.2.2.1}
Let $n-1\leq \alpha<n$. If $u(x)\in I_{a+}^{\alpha}[L_p(\Omega)]$,
$v(x)\in I_{b-}^{\alpha}[L_q(\Omega)]$, with $\frac{1}{p}+\frac{1}{q}=1$, then
\begin{equation}\label{equation:2.2.8}
\langle {}_{a}D_{x}^{\alpha}u(x),v(x)\rangle=\langle u(x),{}_{x}D_{b}^{\alpha}v(x)\rangle,
\end{equation}
where $\langle \cdot,\cdot \rangle$ is the duality paring of $L_p(\Omega)$ and $L_q(\Omega)$.
\end{corollary}
\begin{proof}
If
$v(x)\in I_{b-}^{\alpha}[L_q(\Omega)]$, then there exists $\varphi_2(x)={}_{x}D_{b}^{\alpha}v(x)\in L_q(\Omega)$, such that $v(x)={}_{x}I_{b}^\alpha \varphi_2(x)$. So, by Lemma \ref{lemma:2.2.2.1},
$$\langle {}_{a}D_{x}^{\alpha}u(x),v(x)\rangle
=\langle {}_{a}D_{x}^{\alpha}u(x),{}_{x}I_{b}^\alpha \varphi_2(x)\rangle
=\langle {}_{a}I_{x}^\alpha{}_{a}D_{x}^{\alpha}u(x), \varphi_2(x)\rangle
=\langle u(x), {}_{x}D_{b}^{\alpha}v(x)\rangle.$$
\end{proof}

\begin{corollary}\label{corollary:2.2.1}
If $u(x)\in I_{a+}^\alpha[L_1(\Omega)]$, then
$${}_{a}D_{x}^{\alpha}u(x)=\textbf{D}^{\alpha}u(x)
:=D{}_{a}I_{x}^{n-\alpha}D^{n-1}u(x), \quad x\in \Omega.$$
In particular, we have
$${}_{a}D_{x}^{\alpha}u(x)=\textbf{D}^{\alpha}u(x)
:=D{}_{a}I_{x}^{2-\alpha}Du(x), \quad x\in \Omega,$$
for $1\leq\alpha<2$.
\end{corollary}
\begin{proof}
%
%

If $u(x)\in I_{a+}^\alpha[L_1(\Omega)]$, then there exists $\varphi(x)\in L_1(\Omega)$, such that
$u(x)={}_{a}I_{x}^{\alpha}\varphi(x)$, and ${}_{a}D_{x}^{\alpha}u(x)=\varphi(x)$. So,
$$
\begin{array}{lll}
D{}_{a}I_{x}^{n-\alpha}D^{n-1}u(x)
&=&D{}_{a}I_{x}^{n-\alpha}D^{n-1}{}_{a}I_{x}^{\alpha}\varphi(x)\\
&=&D{}_{a}I_{x}^{n-\alpha}D^{n-1}{}_{a}I_{x}^{n-1}I_{x}^{\alpha-(n-1)}\varphi(x)\\
&=&D{}_{a}I_{x}^{n-\alpha}I_{x}^{\alpha-(n-1)}\varphi(x)\\
&=&D{}_{a}I_{x}^{1}\varphi(x)\\
&=&\varphi(x).
\end{array}
$$
\end{proof}

\begin{remark}\label{remark:1}
The fractional operator $\textbf{D}^{\alpha}u(x):=D{}_{a}I_{x}^{n-\alpha}D^{n-1}u(x)$ is discussed in~\cite{Ervin:16}. The authors in~\cite{Ervin:16} believe that the abnormal diffusion problem with the operator defined in this way, and with normal Dirichlet boundary conditions, physically make sense.
\end{remark}

Besides the nonlocalness, the main point of Riemann-Liouville operator lies on the singularity at the boundary points. The authors in~\cite{Samko:93} make a contribution on recovering delicately the effect of Riemann-Liouville integral operator makes on the boundary point, which helps us to remove the ambiguous feeling about it. After relabeling and rearrangement, we list the embedding properties of fractional integral operator here:
\begin{lemma}[\rm Theorem 2.6, Theorem 3.5, Theorem 3.6 in~\cite{Samko:93}]\label{lemma:2.2.4}
\begin{enumerate}
\renewcommand{\labelenumi}{(\roman{enumi})}
\item
If $p\ge 1$, then
$$I_{a+}^\alpha[L_p(\Omega)]\hookrightarrow L_p(\Omega),$$
and
%
\begin{equation}\label{equation:2.2.9}
\|{}_{a}I_{x}^\alpha\varphi(x)\|_{p}\le \frac{(b-a)^\alpha}{\Gamma(\alpha+1)}\|\varphi(x)\|_{p}~~~\forall \varphi(x)\in L_p(\Omega).
\end{equation}

\item
If $0<\alpha<1$ and $1<p<1/\alpha$, then
$$I_{a+}^\alpha[L_p(\Omega)]\hookrightarrow L_q(\Omega), \textrm{~where~} q=\frac{p}{1-\alpha p}.$$
\item
If $p>1/\alpha$ and $\alpha-1/p \notin \mathbb{N}^{+}$, then $$I_{a+}^\alpha[L_p(\Omega)]\hookrightarrow C^{n_0,\alpha-1/p-n_0}(\Omega),$$
and
$${}_{a}I_{x}^\alpha\varphi(x)=o\left((x-a)^{\alpha-1/p}\right)~\textrm{as}~x\rightarrow a$$ for any $\varphi(x)\in L_p(\Omega)$,
where $n_0=\lfloor \alpha-1/p \rfloor$, $C^{m,\gamma}(\Omega)$ is a H\"{o}lder space defined by
\begin{equation}\label{equation:2.2.10}
\begin{array}{lll}
C^{m,\gamma}(\Omega)
&=&\left\{f:f(x)\in C^{m}(\Omega) \textrm{~and~}\exists~A>0 ~s.t. ~
\frac{|D^m f(x)-D^m f(y)|}{|x-y|^\gamma}\le A~ ~\forall x,y\in\Omega\right\}.
\end{array}
\end{equation}
\item
If $0<1/p<\alpha<1+1/p$, then
$$I_{a+}^\alpha[ L_p(\Omega)]\hookrightarrow c^{0,\alpha-1/p}(\Omega),$$
and
$${}_{a}I_{x}^\alpha\varphi(x)=o\left((x-a)^{\alpha-1/p}\right)~\textrm{as}~x\rightarrow a$$ for any $\varphi(x)\in L_p(\Omega)$,
where $c^{m,\gamma}(\Omega)$ 
is defined by
\begin{equation}\label{equation:2.2.11}
c^{m,\gamma}(\Omega)=\left\{f:f(x)\in C^{m}(\Omega) \textrm{~and~}
\frac{|D^m f(x)-D^m f(y)|}{|x-y|^\gamma}\rightarrow 0 ~\textrm{as~}x\rightarrow y\right\}.
\end{equation}
More specifically, given any $\epsilon>0$, for any $\varphi(x)\in L_p(\Omega)$, and any $x,\,y\in[a,b]$, there exist constants $c_1=c_1(\alpha)$, and $c_2=c_2(\alpha)$, such that
\begin{equation}\label{equation:2.2.11_1}
|{}_{a}I_{x}^{\alpha}\varphi(x)-{}_{a}I_{y}^{\alpha}\varphi(y)|
\leq c_1|x-y|^{\alpha}+c_2|x-y|^{\alpha-\frac{1}{p}}\cdot\epsilon.
\end{equation}
\end{enumerate}
\end{lemma}

By (\expandafter{\romannumeral1}) of Lemma \ref{lemma:2.2.4}, it is easy to see that the following results hold.
\begin{lemma}\label{lemma:2.2.5}
If $\alpha_1>\alpha_2\geq0$, then $I_{a+}^{\alpha_1}[L_p(\Omega)]\subseteq I_{a+}^{\alpha_2}[L_p(\Omega)]$.
\end{lemma}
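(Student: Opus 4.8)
The plan is to show that every $f \in {}_{a}I_{x}^{\alpha_1}[L_p(a,b)]$ can be rewritten as a fractional integral of order $\alpha_2$ of some $L_p$ function, using the semigroup property of the Riemann-Liouville integral. Write $\alpha_1 = \alpha_2 + (\alpha_1 - \alpha_2)$ with $\alpha_1 - \alpha_2 > 0$. By definition of the space, there exists $\varphi \in L_p(a,b)$ with $f(x) = {}_{a}I_{x}^{\alpha_1}\varphi(x)$. The key identity is the composition rule ${}_{a}I_{x}^{\alpha_1}\varphi = {}_{a}I_{x}^{\alpha_2}\,{}_{a}I_{x}^{\alpha_1-\alpha_2}\varphi$, which holds for summable $\varphi$ (this is the standard semigroup property of Riemann-Liouville integrals, e.g. in \cite{Samko:93,Podlubny:99}). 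Setting $\psi(x) := {}_{a}I_{x}^{\alpha_1-\alpha_2}\varphi(x)$, we then have $f(x) = {}_{a}I_{x}^{\alpha_2}\psi(x)$.

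The one point that needs care is that $\psi$ must actually lie in $L_p(a,b)$, not merely in $L_1$. This follows immediately from Lemma \ref{lemma:2.2.4}(i): since $\varphi \in L_p(a,b)$ and $\alpha_1 - \alpha_2 > 0$, the operator ${}_{a}I_{x}^{\alpha_1-\alpha_2}$ is bounded on $L_p(a,b)$, so $\|\psi\|_p \le \frac{(b-a)^{\alpha_1-\alpha_2}}{\Gamma(\alpha_1-\alpha_2+1)}\|\varphi\|_p < \infty$. Hence $\psi \in L_p(a,b)$ and, by the definition \eqref{equation:2.2.1} of the space, $f \in {}_{a}I_{x}^{\alpha_2}[L_p(a,b)]$. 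One should also note the boundary case $\alpha_2 = 0$: there ${}_{a}I_{x}^{0}$ is the identity and ${}_{a}I_{x}^{0}[L_p(a,b)] = L_p(a,b)$, and the argument still applies since then $f = \psi \in L_p(a,b)$ directly.

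I expect no serious obstacle here; the only thing to be attentive to is invoking the semigroup property at the right level of generality (it is valid for $L_1$, hence a fortiori for $L_p$ with $p \ge 1$ on a bounded interval) and citing Lemma \ref{lemma:2.2.4}(i) to get the $L_p$ (rather than just $L_1$) membership of the intermediate function. If one wanted to be fully self-contained about the composition rule, one could derive it by Fubini's theorem and the Beta-function integral $\int_s^x (x-t)^{\alpha_2-1}(t-s)^{\alpha_1-\alpha_2-1}\,dt = \frac{\Gamma(\alpha_2)\Gamma(\alpha_1-\alpha_2)}{\Gamma(\alpha_1)}(x-s)^{\alpha_1-1}$, but since the excerpt is free to cite \cite{Samko:93}, that computation can be omitted.
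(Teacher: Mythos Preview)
Your proposal is correct and follows essentially the same route as the paper's proof: write $f = {}_{a}I_{x}^{\alpha_1}\varphi = {}_{a}I_{x}^{\alpha_2}\bigl({}_{a}I_{x}^{\alpha_1-\alpha_2}\varphi\bigr)$ via the semigroup property, then invoke Lemma~\ref{lemma:2.2.4}(i) to get ${}_{a}I_{x}^{\alpha_1-\alpha_2}\varphi \in L_p(a,b)$. The paper additionally records the resulting norm inequality $\|f\|_{{}_{a}I_{x}^{\alpha_2}(L_p)} \le C\,\|f\|_{{}_{a}I_{x}^{\alpha_1}(L_p)}$ (continuity of the embedding), which your bound on $\|\psi\|_p$ already contains.
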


\begin{lemma}\label{lemma:2.2.6}
Let $\alpha_1>0$, $\alpha_2>0$, $\alpha_1+\alpha_2=\alpha$. If
$u(x)\in I_{a+}^{\alpha}[L_p(\Omega)]$, then
\begin{equation}\label{equation:2.2.12}
{}_{a}D_{x}^{\alpha}u(x)={}_{a}D_{x}^{\alpha_1}{}_{a}D_{x}^{\alpha_2}u(x),
\end{equation}
and
\begin{equation}\label{equation:2.2.13}
{}_{a}D_{x}^{\alpha_2}u(x)\in I_{a+}^{\alpha_1}[L_p(\Omega)].
\end{equation}
\end{lemma}

\begin{remark}
A parallel result of Eq. (\ref{equation:2.2.12}) can also see in Theorem 2.13 of~\cite{Diethelm:10}.
\end{remark}

\begin{proof}
If $u(x)\in I_{a+}^{\alpha}[L_p(\Omega)]$, then there exists $\varphi(x)={}_{a}D_{x}^{\alpha}u(x)\in L_p(\Omega)$, such that
$$u(x)={}_{a}I_{x}^{\alpha}\varphi(x)={}_{a}I_{x}^{\alpha_2}[{}_{a}I_{x}^{\alpha_1}\varphi(x)].$$
By (\expandafter{\romannumeral1}) of Lemma \ref{lemma:2.2.4},
since ${}_{a}I_{x}^{\alpha_1}\varphi(x)\in L_p(\Omega)$, there is $u(x)\in I_{a+}^{\alpha_2}[L_p(\Omega)]$, and
$${}_{a}D_{x}^{\alpha_2}u(x)={}_{a}I_{x}^{\alpha_1}\varphi(x)\in I_{a+}^{\alpha_1}[L_p(\Omega)].$$
Also,
$${}_{a}D_{x}^{\alpha_1}{}_{a}D_{x}^{\alpha_2}u(x)
={}_{a}D_{x}^{\alpha_1}{}_{a}I_{x}^{\alpha_1}\varphi(x)
=\varphi(x)={}_{a}D_{x}^{\alpha}u(x).$$
\end{proof}

From definitions in (\ref{equation:2.2.10}) and (\ref{equation:2.2.11}), it is not difficult to see that $C^{m,\gamma}(\Omega)$ implies $c^{m,\gamma}(\Omega)$. What is more, the general Sobolev inequality in~\cite{Evans:10} shows that $W^{m+1,p}(\Omega)$, $m\geq 0$, $p>1$, is embedded into $C^{m,1-\frac{1}{p}}(\Omega)$, i.e.,
\begin{equation}\label{equation:2.2.7_1}
W^{m+1,p}(\Omega) \hookrightarrow C^{m,1-\frac{1}{p}}(\Omega),
\end{equation}
for $p>1$, $m\geq 0$, where $W^{m,p}(\Omega)$ is a classical Sobolev space, consists of all locally summable functions $\varphi: \Omega\longrightarrow \mathbb{R}$, such that for each $k$, $0\leq k\leq m$, $D^k\varphi(x)$ exists in the weak sense and belongs to $L_p(\Omega)$.

The following result shows indirectly a further relationship between the two H\"{o}lder spaces defined in (\ref{equation:2.2.10}) and (\ref{equation:2.2.11}).

\begin{lemma}\label{lemma:2.2.8}
If $0<1/p<\alpha<1+1/p$, then
\begin{equation}\label{equation:2.2.15}
I_{a+}^\alpha[ L_p(\Omega)]\subseteq W^{1,p'}(\Omega),
\end{equation}
where $p'=\frac{p}{1+p(1-\alpha)}$.
\end{lemma}

\begin{proof}
For any fixed $x\in[a,b]$, and any $0<\varepsilon_0<1/p$, let $\epsilon_0:=(x-a)^{\varepsilon_0}$. If $0<1/p<\alpha<1+1/p$, and $u\in I_{a+}^\alpha[ L_p(\Omega)]$, then by Eq. (\ref{equation:2.2.11_1}),
since $u(a)=0$, there exist $c_1:=c_1(\alpha)$ and $c_2:=c_2(\alpha)$, such that
$$|u(x)|\leq
c_1 (x-a)^{\alpha}+c_2(x-a)^{\alpha-\frac{1}{p}}\cdot\epsilon_0
=c_1 (x-a)^{\alpha}+c_2(x-a)^{\alpha-\frac{1}{p}+\varepsilon_0}.$$
Thereby, there exists a constant $C_0:=C_0(\alpha,a,b)>0$, such that
$$|u(x)|\leq C_0\cdot(x-a)^{\alpha-\frac{1}{p}+\varepsilon_0}.$$
Since $(x-a)^{\alpha-\frac{1}{p}+\varepsilon_0}\in W^{1,p'}(\Omega)$, we have $u(x)\in W^{1,p'}(\Omega).$
\end{proof}

\subsubsection{Intersection of Left-side and Right-side Spaces}\label{subsection:2.2.3}

The authors in~\cite{Samko:93} discussed the relationship between the $\alpha$-order left-side and right-side fractional integral spaces of $L_p(\Omega)$ functions. After flipping through pages and relabeling, we rearrange the results and list them as follows.

\begin{lemma}[\rm~\cite{Samko:93}, p.208 and p.337]\label{lemma:2.3.1}
When $0<\alpha<1/p$, and $1<p<\infty$, then
\begin{equation}\label{equation:2.3.1}
H^{\alpha,p}(\Omega)=I^{\alpha}[L_p(\Omega)]
:=I_{a+}^{\alpha}[L_p(\Omega)]=I_{b-}^{\alpha}[L_p(\Omega)],
\end{equation}
up to the equivalence of norms. When $1/p<\alpha<1/p+1$, then
\begin{equation}\label{equation:2.3.2}
H_0^{\alpha,p}(\Omega)
=I_{a+}^{\alpha}[L_p(\Omega)]\cap I_{b-}^{\alpha}[L_p(\Omega)],
\end{equation}
where
$$H_0^{\alpha,p}(\Omega)=\left\{f:f(x)\in H^{\alpha,p}(\Omega), \textrm{~and~} f(a)=f(b)=0\right\},$$
and
$$H^{\alpha,p}(\Omega)=\left\{f:\exists ~g(x)\in H^{\alpha,p}(\mathbb{R}),
~\textrm{s.t.}~ g(x)\big|_{\Omega}=f(x)\right\},$$
with the norm $\|f(x)\|_{H^{\alpha,p}(\Omega)}=\inf\|g(x)\|_{H^{\alpha,p}(\mathbb{R})}$.
\end{lemma}

Based on the above results, we can get two interesting results about the intersection of the image spaces of left-side and right-side Riemann-Liouville integrals of $L_p(\Omega)$.
\begin{theorem}\label{theorem:2.3.1}
Let $0\leq \alpha< \frac{1}{2}$. Then, there hold
\begin{equation}\label{equation:2.3.3}
I^{\alpha}[L_p(\Omega)]=L_2(\Omega),~p=\frac{2}{1+2\alpha},
\end{equation}
and
\begin{equation}\label{equation:2.3.4}
I^{\alpha}[L_2(\Omega)]=L_q(\Omega),~q=\frac{2}{1-2\alpha}.
\end{equation}
\end{theorem}

\begin{proof}
We only show the proof of (\ref{equation:2.3.3}). Equality (\ref{equation:2.3.4}) can be proved in a similar way.

On one hand, for any $\phi(x)\in L_p(\Omega)$, since $1<p=\frac{2}{1+2\alpha}<\frac{1}{\alpha}$, by (\expandafter{\romannumeral2}) of Lemma \ref{lemma:2.2.4}, we have ${}_{a}I_{x}^{\alpha}\phi(x)\in L_2(\Omega)$. Thus, by Lemma \ref{lemma:2.3.1}, $I^{\alpha}\phi(x)\subseteq L_2(\Omega)$.

One the other hand, for any $\psi(x)\in L_2(\Omega)$, we denote 
\begin{equation*}
\phi(x):={}_{a}D_{x}^{\alpha}\psi(x)=D{}_{a}I_{x}^{1-\alpha}\psi(x).
\end{equation*}
Since $\frac{1}{2}<1-\alpha<\frac{3}{2}$, according to (\expandafter{\romannumeral4}) of Lemma \ref{lemma:2.2.4} and Lemma \ref{lemma:2.2.8}, we have ${}_{a}I_{x}^{1-\alpha}\psi(x)\big|_{x=a}=0$, and ${}_{a}I_{x}^{1-\alpha}\psi(x)\in W^{1,p}(\Omega)$, $p=\frac{2}{1+2\alpha}$. Thus, $\phi(x)$ makes sense, and $\phi(x)={}_{a}D_{x}^{\alpha}\psi(x)\in L_p(\Omega)$, $p>1$.

Since ${}_{a}I_{x}^{1-\alpha}\psi(x)\in W^{1,p}(\Omega)\subseteq AC(\Omega)$, by Lemma \ref{lemma:2.2.1.1} and Lemma \ref{lemma:2.2.2.1}, there is $\psi(x)\in I^\alpha[L_1(\Omega)]$, and $\psi(x)={}_{a}I_{x}^{\alpha}{}_{a}D_{x}^{\alpha}\psi(x)$. Thus, $\psi(x)\in{}_{a}I_{x}^{\alpha}[L_p(\Omega)]$, because of ${}_{a}D_{x}^{\alpha}\psi(x)\in L_p(\Omega)$.

\end{proof}

\begin{theorem}\label{theorem:2.3.2}
Let $\frac{1}{2}<\alpha<\frac{3}{2}$. Then, there holds
\begin{equation}\label{equation:2.3.6}
\begin{array}{lll}
&&I_{a+}^{\alpha}[L_2(\Omega)]\cap I_{b-}^{\alpha}[L_2(\Omega)]\\
&=&\big\{
f:f(x)\in W^{1,q}(a,b),f(x)=o((x-a)^{\alpha-\frac{1}{2}})~\textrm{as}~x\rightarrow a,
f(x)=o((b-x)^{\alpha-\frac{1}{2}})~\textrm{as}~x\rightarrow b\big\},
\end{array}
\end{equation}
where $q=\frac{2}{3-2\alpha}$.
\end{theorem}

\begin{proof}
On one hand, if $v(x)\in I_{a+}^{\alpha}[L_2(\Omega)]\cap I_{b-}^{\alpha}[L_2(\Omega)]$, then
by (\expandafter{\romannumeral4}) of Lemma \ref{lemma:2.2.4}, we have $v(x)=o((x-a)^{\alpha-\frac{1}{2}})~\textrm{as}~x\rightarrow a$,
and $v(x)=o((b-x)^{\alpha-\frac{1}{2}})~\textrm{as}~x\rightarrow b$. By Lemma \ref{lemma:2.2.8}, we have
$v(x)\in W^{1,q}(\Omega)$, where $q=\frac{2}{3-2\alpha}$.

One the other hand, if $v(x)\in W^{1,q}(\Omega)$, $q=\frac{2}{3-2\alpha}$, then by Eq. (\ref{equation:2.2.7_1}), $v(x)$ is continuous, and $Dv(x)\in L_q(\Omega)$. What is more, if $v(x)=o((x-a)^{\alpha-\frac{1}{2}})~\textrm{as}~x\rightarrow a$, then by (\expandafter{\romannumeral2}) of Lemma \ref{lemma:2.2.4}, we can get $$\phi(x):=\,_{a}D_{x}^{\alpha}v(x)=D\,_{a}I_{x}^{1-\alpha}v(x)
=\,_{a}I_{x}^{1-\alpha}Dv(x)\in L_2(\Omega).$$
Thus, $\phi(x)$ makes sense. Also, since $v(x)$ is continuous, and $v(a)=0$,
$$\,_{a}I_{x}^{\alpha}\phi(x)
=\,_{a}I_{x}^{\alpha}\,_{a}I_{x}^{1-\alpha}Dv(x)=\,_{a}I_{x}^1\,Dv(x)
=v(x).$$
So, $v(x)\in I_{a+}^{\alpha}[L_2(\Omega)].$

Similarly, we can proof $v(x)\in I_{b-}^{\alpha}[L_2(\Omega)]$.
\end{proof}

\subsubsection{Tempered Fractional Operators}\label{subsection:2.2.4}

The tempered fractional calculus is a mathematical tool to describe the transition between normal and anomalous diffusions (or the anomalous motion in finite time or bounded physical space). Exponentially tempering the L\'{e}vy measure is a popular way
to make the moments of L\'{e}vy  distributions finite
in transport models. In spatially
tempered fractional Fokker-Planck equation~\cite{Cartea:07, Sabzikar:15}, the left and the right tempered
fractional derivative are defined, respectively, as
\begin{equation}\label{equation:2.2.4.1}
_{a}D_{x}^{\alpha,\lambda}u(x)
:=e^{-\lambda x}{{_{a}}D_{x}^{\alpha}}\big( e^{\lambda x}u(x)\big),
\end{equation}
and
\begin{equation}\label{equation:2.2.4.2}
_{x}D_b^{\alpha,\lambda}u(x)
:=e^{\lambda x}{{_{x}}D_b^{\alpha}}\big( e^{-\lambda x}u(x)\big),
\end{equation}
with $n-1\le \alpha<n,\,\lambda\ge 0$.

By above definitions, we can see that from the mathematical view, there is no essential difficulty when solving a tempered fractional problem by using finite difference methods, compared with solving the corresponding Riemann-Liouville one. However, as far as we know, there has no literature to show that tempered and Riemann-Liouville fractional problems are reciprocal in some sense when solving them by spectral methods or finite element methods.

In this subsection, we show that under the image space of fractional integrals of $L_p(\Omega)$ functions, tempered fractional operator is indeed equivalent to its corresponding Riemann-Liouville fractional operator, which is expected to make some efforts on theoretical support for relevant numerical methods.

We firstly borrow a result from~\cite{Samko:93}. After relabeling and rearrangement, we list it in the following way.

\begin{lemma}[\rm Lemma 10.3 in~\cite{Samko:93}]\label{lemma:2.2.3.1}
Let $\alpha>0$, $\sigma\in \mathbb{R}$, and $1\le p<\infty$. Then
\begin{equation}\label{equation:2.2.3.1}
e^{-\sigma x}{}_{a}I_{x}^{\alpha}e^{\sigma x}v(x)\in I_{a+}^{\alpha}[L_p(\Omega)]\Leftrightarrow
v(x)\in L_p(\Omega).
\end{equation}
\end{lemma}

\begin{corollary}\label{corollary:2.2.3.1}
Let $\alpha>0$, $\sigma\in \mathbb{R}$, and $1\le p<\infty$. Then
$$u(x)\in I_{a+}^{\alpha}[L_p(\Omega)]$$
if and only if
$$e^{\sigma x}u(x)\in I_{a+}^{\alpha}[L_p(\Omega)].$$
Also, there are positive constants $C_1$ and $C_2$, which depend on $\alpha$, $\sigma$, and
$a,~b$, such that
\begin{equation}\label{equation:2.2.3.3}
C_1\|u(x)\|_{I_{a+}^{\alpha}[L_p(\Omega)]}\leq
\|e^{\sigma x}u(x)\|_{I_{a+}^{\alpha}[L_p(\Omega)]}\leq
C_2 \|u(x)\|_{I_{a+}^{\alpha}[L_p(\Omega)]}.
\end{equation}
That is, there are positive constants $\tilde{C}_1$ and $\tilde{C}_2$, which depend on $\alpha$, $\sigma$, and
$a,~b$, such that
\begin{equation}\label{equation:2.2.3.3_1}
\tilde{C}_1\|{}_{a}D_{x}^{\alpha}u(x)\|_{L_p(\Omega)}\leq
\|e^{-\sigma x}{}_{a}D_{x}^{\alpha}\left(e^{\sigma x}u(x)\right)\|_{L_p(\Omega)}\leq
\tilde{C}_2 \|{}_{a}D_{x}^{\alpha}u(x)\|_{L_p(\Omega)}.
\end{equation}
\end{corollary}

\begin{proof}

If $u(x)\in I_{a+}^{\alpha}[L_p(\Omega)]$,
then there exists $\nu(x)\in L_p(\Omega)$, such that
\begin{equation*}
u(x)={}_{a}I_{x}^{\alpha}\nu(x)={}_{a}I_{x}^{\alpha}e^{-\sigma x} e^{\sigma x}\nu(x)
:={}_{a}I_{x}^{\alpha}e^{-\sigma x} \tilde{\nu}(x),
\end{equation*}
where $\tilde{\nu}(x)=e^{\sigma x}\nu(x)\in L_p(\Omega)$.
Therefore, $e^{\sigma x}u(x)=e^{\sigma x}{}_{a}I_{x}^{\alpha}e^{-\sigma x} \tilde{\nu}(x)$.
Then, by Lemma \ref{lemma:2.2.3.1}, we have $e^{\sigma x}u(x)\in I_{a+}^{\alpha}[L_p(\Omega)]$.
Similarly, we have $e^{-\sigma x}u(x)\in I_{a+}^{\alpha}[L_p(\Omega)]$.

If $e^{\sigma x}u(x)\in{}_{a}I_{x}^{\alpha}[L_p(\Omega)]$, then by the above analysis, we have
\begin{equation*}
e^{-\sigma x}\left(e^{\sigma x}u(x)\right)=u(x)\in I_{a+}^{\alpha}[L_p(\Omega)].
\end{equation*}

Now we prove there exists a constant $C$, which depends on $\alpha, \sigma, a, b$, such that
\begin{equation}\label{equation:2.2.3.4}
\|e^{\sigma x}{}_{a}I_{x}^{\alpha}e^{-\sigma x} \tilde{\nu}(x)\|_{I_{a+}^{\alpha}[L_p(\Omega)]}
\leq C\|\tilde{\nu}(x)\|_{L_p(\Omega)}.
\end{equation}
We only prove the case when $0<\alpha<1$.

Actually, according to the proof of Lemma 31.4 in~\cite{Samko:93}, we can get
$$e^{\sigma x}{}_{a}I_{x}^{\alpha}e^{-\sigma x} \tilde{\nu}(x)
={}_{a}I_{x}^{\alpha}\tilde{\nu}(x)+T_1\tilde{\nu}(x),$$
where
$$T_1\tilde{\nu}(x):=
\frac{1}{\Gamma(\alpha)}\int_{a}^x \frac{e^{\sigma(x-\xi)}-1}{(x-a)^{1-\alpha}}\cdot\tilde{\nu}(\xi)\,d\xi
=\frac{1}{\Gamma(\alpha)}\,{}_{a}I_{x}^{\alpha}V_1\tilde{\nu}(x),$$
and
$$V_1\tilde{\nu}(x):=\frac{\alpha}{\Gamma(1-\alpha)}
\int_{a}^x \tilde{\nu}(s)\,ds\int_{s}^x \frac{e^{\sigma(x-s)}-e^{\sigma(t-s)}}{(t-s)^{1-\alpha}(x-t)^{1+\alpha}}\,dt.$$
Since there exists a constant $c_1=c_1(\sigma,a,b)$, such that
$$|e^{\sigma(x-s)}-e^{\sigma(t-s)}|\leq c_1|x-t|,$$
then by the definition of Beta function
$B(z,w)=\int_{0}^1 \tau^{z-1}(1-\tau)^{w-1}\,d\tau~~(\Re(z)>0,\Re(w)>0)$, there exists another constant $c_2=c_2(\sigma,a,b)$, such that
$$\big|\int_{s}^x \frac{e^{\sigma(x-s)}-e^{\sigma(t-s)}}
{(t-s)^{1-\alpha}(x-t)^{1+\alpha}}\,dt\big|
\leq c_2\cdot\Gamma(\alpha)\Gamma(1-\alpha).$$
Thus,
$$\|V_1\tilde{\nu}\|_{L_p(\Omega)}\leq c_3(\sigma,a,b)\cdot\Gamma(\alpha+1)\|\tilde{\nu}\|_{L_p(\Omega)}.$$
Therefore, by Eqs. (\ref{norm_redefined}), (\ref{equation:2.2.9}), we have
$$\|e^{\sigma x}{}_{a}I_{x}^{\alpha}e^{-\sigma x} \tilde{\nu}(x)\|_{I_{a+}^{\alpha}[L_p(\Omega)]}
\leq \|{}_{a}I_{x}^{\alpha}\tilde{\nu}(x)\|_{I_{a+}^{\alpha}[L_p(\Omega)]}
+\|T_1\tilde{\nu}(x)\|_{I_{a+}^{\alpha}[L_p(\Omega)]}
\leq C\|\tilde{\nu}(x)\|_{L_p(\Omega)}.$$

Finally, by Eqs. (\ref{norm_redefined}), (\ref{equation:2.2.9}), and Lemma \ref{lemma:2.2.3.1},
we have
\begin{eqnarray*}
&&\|e^{\sigma x}u(x)\|_{I_{a+}^{\alpha}[L_p(\Omega)]}\\
&=&\|e^{\sigma x}{}_{a}I_{x}^{\alpha}e^{-\sigma x} \tilde{\nu}(x)\|_{I_{a+}^{\alpha}[L_p(\Omega)]}
\leq C\|\tilde{\nu}(x)\|_{L_p(\Omega)}
\leq C_2\|u(x)\|_{I_{a+}^{\alpha}[L_p(\Omega)]}.
\end{eqnarray*}
The lower bound
\begin{equation*}
C_1\|u(x)\|_{I_{a+}^{\alpha}[L_p(\Omega)]}
\leq \|e^{\sigma x}u(x)\|_{I_{a+}^{\alpha}[L_p(\Omega)]},
\end{equation*}
can be obtained in a similar way.
\end{proof}

For $\alpha>0$, and $\lambda>0$, denote
$$
{}_{a}I_{x}^{\alpha,\lambda}u(x)
:=e^{-\lambda x}{{_{a}}I_{x}^{\alpha}}\big( e^{\lambda x}u(x)\big),
$$
and
$$
{}_{x}I_b^{\alpha,\lambda}u(x)
:=e^{\lambda x}{{_{x}}I_b^{\alpha}}\big( e^{-\lambda x}u(x)\big).
$$
By Lemma \ref{lemma:2.2.3.1}, if $u(x)\in L_p(\Omega)$, $1\leq p<\infty$, then
${}_{a}I_{x}^{\alpha,\lambda}u(x)\in I_{a+}^{\alpha}[L_p(\Omega)]$, and ${}_{x}I_b^{\alpha,\lambda}u(x)\in I_{b-}^{\alpha}[L_p(\Omega)]$.

For any function $h(x)$, $x\in\Omega$, by means of
zero extension, it can also be viewed as a function defined on $\mathbb{R}$. So, if a function $g(x)\in L_p(a,b)$, then $\widehat{g}(\omega)\in
L_p(\mathbb{R})$,
where $\widehat{g}(\omega):=\mathcal{F}(g)(\omega)=\int_{\mathbb{R}}e^{i\omega x }g(x)\,dx$ is the Fourier transform of $g(x)$. We show the following result for tempered fractional integrals, which is similar to Lemma 2.4 in~\cite{Ervin:06}.

\begin{theorem}\label{theorem:2.2.3.1}

Let $0<\alpha<\frac{1}{2}$, and $\lambda\geq0$. If $u(x)\in L_p(\Omega)$, $p=\frac{2}{1+2\alpha}$,
then
\begin{equation}\label{equation:2.2.3.5}
\left(\,_{a}I_{x}^{\alpha,\lambda}u(x),\,_{x}I_{b}^{\alpha,\lambda}u(x)\right)
\geq \cos(\pi\alpha)\cdot\|\,_{a}I_{x}^{\alpha,\lambda}u(x)\|_2^2
=\cos(\pi\alpha)\cdot\|\,_{x}I_{b}^{\alpha,\lambda}u(x)\|_2^2.
\end{equation}
\end{theorem}

\begin{proof}

Firstly, by Theorem \ref{theorem:2.3.1} and Lemma \ref{lemma:2.2.3.1},
we can see that both $\,_{a}I_{x}^{\alpha,\lambda}u(x)$ and
$\,_{x}I_{b}^{\alpha,\lambda}u(x)$ belong to $L_2(\Omega)$.

By noticing that
$$
(\lambda+i\omega)^{-\alpha}=|\lambda^2+\omega^2|^{-\alpha/2}e^{-i\theta \alpha},
~~\textrm{and}~~
(\lambda-i\omega)^{-\alpha}=|\lambda^2+\omega^2|^{-\alpha/2}e^{i\theta \alpha},
$$
where $\tan\theta=\frac{\omega}{\lambda}$,
we can get
$$
\overline{(\lambda+i\omega)^{-\alpha}}=
\overline{(\lambda-i\omega)^{-\alpha}}\cdot e^{2i\theta \alpha},
~~\textrm{and~~} |\theta|\leq \frac{\pi}{2}.
$$
Therefore, by the equalities~\cite{Chen:15b}
$$
\mathcal{F}[\,_{a}D_{x}^{-\alpha,\lambda}g(x)](\omega)=
(\lambda-i\omega)^{-\alpha}\widehat{g}(\omega),
$$
and
$$
\mathcal{F}[\,_{x}D_{b}^{-\alpha,\lambda}g(x)](\omega)=
(\lambda+i\omega)^{-\alpha}\widehat{g}(\omega),
$$
we have
$$
\begin{array}{lll}
&&\left(\,_{a}I_{x}^{\alpha,\lambda}u(x),\,_{x}I_{b}^{\alpha,\lambda}u(x)\right)
\\
&=&\int_{-\infty}^{\infty}
(\lambda-i\omega)^{-\alpha}\widehat{f}(\omega)\cdot
\overline{(\lambda+i\omega)^{-\alpha}\widehat{f}(\omega)}\,d\omega
\\
&=&\int_{-\infty}^{\infty}
(\lambda-i\omega)^{-\alpha}\widehat{f}(\omega)\cdot e^{2i\theta\alpha}
\overline{(\lambda-i\omega)^{-\alpha}{\widehat{f}(\omega)}}\,d\omega
\\
&=&\int_{-\infty}^{\infty}\cos(2\theta\alpha)
\mathcal{F}\left({}_{a}I_{x}^{\alpha,\lambda}u(x)\right)(\omega)\,
\overline{\mathcal{F}\left({}_{a}I_{x}^{\alpha,\lambda}u(x)\right)(\omega)\,}d\omega
\\
&&+i\int_{0}^{\infty}\sin(2\theta\alpha)
\bigg[\mathcal{F}\left({}_{a}I_{x}^{\alpha,\lambda}u(x)\right)(\omega)\,
\overline{\mathcal{F}\left({}_{a}I_{x}^{\alpha,\lambda}u(x)\right)(\omega)\,}d\omega
\\
&&~~~~~~~~~~~~~~~~~~~~~~~-\mathcal{F}\left({}_{a}I_{x}^{\alpha,\lambda}u(x)\right)(-\omega)\,
\overline{\mathcal{F}\left({}_{a}I_{x}^{\alpha,\lambda}u(x)\right)(-\omega)\,}d\omega
\bigg]\,d\omega
\\
&=&\int_{-\infty}^{\infty}\cos(2\theta\alpha)
\mathcal{F}\left({}_{a}I_{x}^{\alpha,\lambda}u(x)\right)(\omega)\,
\overline{\mathcal{F}\left({}_{a}I_{x}^{\alpha,\lambda}u(x)\right)(\omega)\,}d\omega
\\
&&+i\int_{0}^{\infty}\sin(2\theta\alpha)
\bigg[\mathcal{F}\left({}_{a}I_{x}^{\alpha,\lambda}u(x)\right)(\omega)\,
\mathcal{F}\left({}_{a}I_{x}^{\alpha,\lambda}u(x)\right)(-\omega)\,d\omega
\\
&&~~~~~~~~~~~~~~~~~~~~~~~-\mathcal{F}\left({}_{a}I_{x}^{\alpha,\lambda}u(x)\right)(-\omega)\,
\mathcal{F}\left({}_{a}I_{x}^{\alpha,\lambda}u(x)\right)(\omega)\,d\omega
\bigg]\,d\omega
\\
&=&\int_{-\infty}^{\infty}\cos(2\theta\alpha)
\mathcal{F}\left({}_{a}I_{x}^{\alpha,\lambda}u(x)\right)(\omega)\,
\overline{\mathcal{F}\left({}_{a}I_{x}^{\alpha,\lambda}u(x)\right)(\omega)\,}d\omega.
\end{array}
$$
Since
$$
|2\theta\alpha|\leq\alpha\pi<\frac{\pi}{2},
~~~\textrm{and}~~\cos(2\theta\alpha)\geq\cos(\alpha\pi)>0,
$$
we have
$$
\left(\,_{a}I_{x}^{\alpha,\lambda}u(x),\,_{x}I_{b}^{\alpha,\lambda}u(x)\right)
\geq
\cos(\alpha\pi)\|{}_{a}I_{x}^{\alpha,\lambda}u(x)\|_2^2.
$$
\end{proof}

\section{Image Space of Riemann-Liouville Integrals on $W^{m,p}(\Omega)$ Space}\label{section:3}

In this section, we generalize the image spaces of Riemann-Liouville integrals of $L_p(\Omega)$ functions to the spaces that possess high regularity.

\begin{definition}\label{definition:3.0.1}
Let $\alpha>0$ and $1\le p<\infty$. Define
\begin{equation}\label{equation:3.0.1}
I_{a+}^\alpha[W^{m,p}(\Omega)]:=\left\{f:f(x)={}_{a}I_{x}^\alpha \varphi(x), \varphi(x)\in W^{m,p}(\Omega), x\in \Omega\right\},
\end{equation}
and
\begin{equation}\label{equation:3.0.2}
I_{b-}^{\alpha}[W^{m,p}(\Omega)]:=\left\{f:f(x)={}_{x}I_{b}^\alpha \varphi(x), \varphi(x)\in W^{m,p}(\Omega), x\in \Omega\right\}.
\end{equation}
\end{definition}
Similar to the discussions in the above section, $I_{a+}^\alpha[W^{m,p}(\Omega)]$ and $I_{b-}^\alpha[W^{m,p}(\Omega)]$ are also Sobolev spaces, equipped with the norms
\begin{equation}\label{equation:3.0.3}
\|u(x)\|_{I_{a+}^{\alpha}[W^{m,p}(\Omega)]}:=
\|{}_{a}D_{x}^{\alpha}u(x)\|_{W^{m,p}(\Omega)},
\end{equation}
and
\begin{equation}\label{equation:3.0.4}
\|v(x)\|_{I_{b-}^{\alpha}[W^{m,p}(\Omega)]}:=
\|{}_{x}D_{b}^{\alpha}v(x)\|_{W^{m,p}(\Omega)},
\end{equation}
respectively.

It is easy to see that, for $0<\alpha<1$, and any positive integer $m$, there is
$$I_{a+}^{\alpha+m}[L_p(\Omega)]\subseteq I_{a+}^\alpha[W^{m,p}(\Omega)],$$
which means, the space $I_{a+}^\alpha[W^{m,p}(\Omega)]$ we defined here is not a trivial generalization.

The characteristic of the space $I_{a+}^\alpha[W^{m,p}(\Omega)]$ is listed as below.
\begin{theorem}\label{theorem:3.0.1}
In order that $u(x)\in I_{a+}^\alpha[W^{m,p}(\Omega)]$, $n-1\le \alpha <n$, $1< p<\infty$, it is necessary and
sufficient that ${}_{a}I_{x}^{n-\alpha}u(x)\in W^{m+n,p}(\Omega)$,
and that
\begin{equation*}
{}_{a}D_{x}^{\alpha-k}u(x)\big|_{x=a}=0,~~k=1,2,\cdots,n.
\end{equation*}
\end{theorem}

\begin{proof}
\emph{Necessity.} Let $u(x)={}_{a}I_{x}^{\alpha}\varphi(x)$,
$\varphi\in W^{m,p}(\Omega)$. In view of the semi-group property, we have $${}_{a}I_{x}^{n-\alpha}u(x)={}_{a}I_{x}^{n}\varphi(x)\in W^{m+n,p}(\Omega).$$
They by the definitions of $AC(\Omega)$ as well as $AC^n(\Omega)$ in (\ref{equation:2.1.1})-(\ref{equation:2.1.2}), we have
$$D^l{}_{a}I_{x}^{n}\varphi(x)\big|_{x=a}
={}_{a}D_{x}^{\alpha-(n-l)}u(x)\big|_{x=a}=0,\quad l=0,1,\cdots,n-1.$$

\emph{Sufficiency.} If
$${}_{a}I_{x}^{n-\alpha}u(x)
\in W^{m+n,p}(\Omega),$$
then by Eq. (\ref{equation:2.2.7_1}),
$${}_{a}I_{x}^{n-\alpha}u(x)
\in C^{m+n-1,1-\frac{1}{p}}(\Omega)\subseteq AC^{n}(\Omega).$$
If also
$$D^l {}_{a}I_{x}^{n-\alpha}u(x)\big|_{x=a}=0,\quad l=0,1,\cdots,n-1,$$
then by Lemma \ref{lemma:2.2.1.1}, $u(x)\in I_{a+}^\alpha[L_1(\Omega)]$, i.e.,
there exists $\varphi(x)\in L_1(\Omega)$, such that
$$u(x)={}_{a}I_{x}^{\alpha}\varphi(x).$$
Actually, since
${}_{a}I_{x}^{n}\varphi(x)={}_{a}I_{x}^{n-\alpha}u(x)\in W^{m+n,p}(\Omega)$,
$\varphi(x)\in W^{m,p}(\Omega)$. Thus, $u(x)\in I_{a+}^\alpha[W^{m,p}(\Omega)]$.

\end{proof}

\subsection{Properties}\label{subsection:3.1}

Since $I_{a+}^\alpha[W^{m,p}(\Omega)]\subseteq I_{a+}^{\alpha}[L_p(\Omega)]$, all properties of $I_{a+}^{\alpha}[L_p(\Omega)]$ we discussed in above section can be inherited by the space $I_{a+}^\alpha[W^{m,p}(\Omega)]$. Besides these, the space $I_{a+}^\alpha[W^{m,p}(\Omega)]$ also owns its special properties.

Let $u(x)={}_{a}I_{x}^{\alpha}\varphi(x)$, $\varphi(x)\in W^{m,p}(\Omega)$. In fact, if $m\geq 1$, and $1<p<\infty$, then by Eq. (\ref{equation:2.2.7_1}), there is $W^{m,p}(\Omega)\in C^{m-1,1-\frac{1}{p}}(\Omega)$, which means $\varphi(x)$ is continuous up to its $(m-1)$-th derivative. So, for $k=1,2,\cdots,m$, there is~\cite{Podlubny:99}
\begin{equation}\label{equation:3.1.1}
D^k u(x)= D^k{}_{a}I_{x}^{\alpha}\varphi(x)
={}_{a}I_{x}^{\alpha}[D^k\,\varphi(x)]
+\sum_{j=1}^{k}\frac{(x-a)^{\alpha-j}}{\Gamma(1+\alpha-j)}
\cdot D^{k-j}\varphi(a).
\end{equation}
In short, if $u(x)=I_{a+}^\alpha[W^{m,p}(\Omega)]$, $m\geq 1$, and $1<p<\infty$, then for $k=1,2,\cdots,m$, there is
$$D^k u(x)=v_1(x)+v_2(x),$$
where
$$v_1(x)=\sum_{j=1}^{k}\frac{(x-a)^{\alpha-j}}{\Gamma(1+\alpha-j)}
\cdot [D^{k-j}{}_{a}D_{x}^{\alpha}u(x)]\big|_{x=a},$$
and
$$v_2(x)\in I_{a+}^\alpha[W^{m-k,p}(\Omega)].$$

\begin{remark}\label{remark:3.0.1}
It should be noted that, different from the embedding properties of $I_{a+}^{\alpha}[L_p(\Omega)]$ in Lemma \ref{lemma:2.2.4}, one can observe from Eq. (\ref{equation:3.1.1}) that,
in general,
$$I_{a+}^\alpha[W^{m,p}(\Omega)]\nsubseteq W^{m,p}(\Omega),$$
because of the summation term in Eq. (\ref{equation:3.1.1}). However, we shall see later that, for $m\geq 0$, and $1<p<\infty$,
$$I_{a+}^\alpha[W^{m,p}(\Omega)]\cap I_{b-}^\alpha[W^{m,p}(\Omega)]
\subseteq W^{m,p}(\Omega)$$
indeed holds.
\end{remark}

\subsubsection{Properties}\label{section:3.1.1}

Define $P_{N}(\Omega)$ as the polynomials spaces of degree less than or equal to $N$ (if $N=-1$, we
define $P_{-1}[-1,1]:=\{0\})$.

We begin discussing the properties of $I_{a+}^{\alpha}[W^{m,p}(\Omega)]$ from the special case when $p=2$ and $W^{m,p}(\Omega)=H^{m}(\Omega)$.

\begin{theorem}\label{theorem:3.1.1}
Let $0<\alpha<1$. Then
$u(x)\in I_{a+}^{\alpha}[H^m(\Omega)],~~m\geq 0,$
is equivalent to
\begin{equation*}
u(x)=u_1(x)+u_2(x),
\end{equation*}
where
\begin{subequations}
\begin{equation*}
u_1(x)\in (x-a)^{\alpha}\cdot P_{m-1}(\Omega),
\end{equation*}
and
$$u_2(x)\in
\left\{
\begin{array}{lll}
W^{m,q_1}(\Omega),& q_1=\frac{2}{1-2\alpha},&
\alpha\in(0,\frac{1}{2}),\vspace{5pt}\\
W^{m+1,q_1'}(\Omega),& q_1'=\frac{2}{3-2\alpha},&
\alpha\in(\frac{1}{2},1),
\end{array}
\right.
$$
with
\begin{equation*}
u_2(x)=o\left((x-a)^{m+\alpha-\frac{1}{2}}\right)
~\textrm{as~} x\rightarrow a.
\end{equation*}
\end{subequations}
\end{theorem}

\begin{proof}
We only show the proof when $\alpha\in(0,\frac{1}{2})$. The case of $\alpha\in(\frac{1}{2},1)$ can be obtained in a similar way.

We prove the result for $m\geq 1$, since when $m=0$, $P_{-1}(\Omega)=\{0\}$, the result is immediate by Theorem \ref{theorem:2.3.1}.

\emph{Necessity:}

If $u(x)\in I_{a+}^{\alpha}[H^m(\Omega)]$, then there exists
$v(x)= {}_{a}D_{x}^{\alpha} u(x)\in H^m(\Omega)$.
Also, similar to Eq. (\ref{equation:3.1.1}), we can get
$$
v(x)
={}_{a}I_{x}^{m} D^mv(x)+\sum_{j=0}^{m-1} \frac{(x-a)^j}{\Gamma(1+j)}\cdot D^{j}v(a)
:=v_2(x)+v_1(x),
$$
and
\begin{equation*}
u(x)
={}_{a}I_{x}^{m+\alpha} D^mv_2(x)
+(1+x)^{\alpha}p_{m-1}(x)
:=u_2(x)+u_1(x),
\end{equation*}
where $p_{m-1}(x)\in P_{m-1}(\Omega)$.

Since $D^mv(x)\in L_2(\Omega)$, by (\expandafter{\romannumeral3}) of Lemma \ref{lemma:2.2.4}, we have
\begin{equation*}
v_2(x)={}_{a}I_{x}^{m}D^mv(x) \in C^{m-1,\frac{1}{2}}(\Omega),
~~~\textrm{and}~~ v_2(x)=o\left((x-a)^{m-\frac{1}{2}}\right)~~\textrm{as~~} x\rightarrow a;
\end{equation*}
also $u_2(x)={}_{a}I_{x}^{m+\alpha}D^mv(x)$ satisfies
$$u_2(x)=o\left((x-a)^{m+\alpha-\frac{1}{2}}\right)~~\textrm{as~~} x\rightarrow a.$$
Thus,
\begin{equation}\label{equation3.1.4.18}
D^kv_2(x)\big|_{x=a}=0,~~k=0,1,\cdots,m-1.
\end{equation}
Then we have
\begin{equation}\label{equation3.1.4.19}
D^j u_2(x)=D^j {}_{a}I_{x}^{\alpha}v_2(x)
={}_{a}I_{x}^{\alpha}\left(D^jv_2(x)\right),~~j=0,1,\cdots,m.
\end{equation}

In addition, since $D^j v_2(x)\in L_2(\Omega)$, it can be yielded from (\expandafter{\romannumeral2}) of Lemma \ref{lemma:2.2.4} that
\begin{equation*}
D^j u_2(x)\in L_{q_1}(\Omega),~~j=0,1,\cdots,m,
\end{equation*}
where $q_1=\frac{2}{1-2\alpha}$. That is, $u_2(x)\in W^{m,q_1}(\Omega)$.

\emph{Sufficiency:}

If $u_2(x)\in W^{m,q_1}(\Omega)$, $q_1=\frac{2}{1-2\alpha}$, then by Eq. (\ref{equation:2.2.7_1}),
$u_2(x)\in C^{m-1,\frac{1}{2}+\alpha}(\Omega)$. Combined with
\begin{equation*}
u_2(x)=o\left((x-a)^{m+\alpha-\frac{1}{2}}\right)
~\textrm{as~} x\rightarrow a,
\end{equation*}
we obtain
\begin{equation*}
D^ju_2(x)\big|_{x=a}=0,~~j=0,1,\cdots,m-1,
\end{equation*}
and
\begin{equation*}
D^k v_2(x):=D^k {}_{a}D_{x}^{\alpha}u_2(x)
={}_{a}I_{x}^{1-\alpha}\left(D^{k+1}u_2(x)\right),\quad k=0,1,\cdots,m-1.
\end{equation*}
Then by Lemma \ref{lemma:2.2.8}, we get
$$D^k v_2(x)\in H^1(\Omega),\quad k=0,1,\cdots,m-1.$$
Thus,
$v_2(x)={}_{a}D_{x}^{\alpha}u_2(x)=D{}_{a}I_{x}^{1-\alpha}u_2(x)\in H^m(\Omega)$.
Thereby,
$${}_{a}I_{x}^{1-\alpha}u_2(x)\in H^{m+1}(\Omega).$$
Since $u_2(x)$ is a continuous function,
${}_{a}I_{x}^{1-\alpha}u_2(x)|_{x=a}=0.$
Finally, by Theorem \ref{theorem:3.0.1}, we obtain
$$u(x)=u_1(x)+u_2(x)\in I_{a+}^{\alpha}[H^{m}(\Omega)].$$
\end{proof}

Another interesting result is listed in the following theorem.
\begin{theorem}\label{theorem:3.1.2}
\begin{enumerate}
\renewcommand{\labelenumi}{(\roman{enumi})}
\item
Let $0<\alpha<\frac{1}{2}$, and $p_2=\frac{2}{1+2\alpha}$. Then
$u(x)\in I_{a+}^{\alpha}[W^{m,p_2}(\Omega)],~m\geq 0,$
is equivalent to
\begin{equation*}
u(x)=u_1(x)+u_2(x),
\end{equation*}
where
\begin{subequations}
\begin{equation*}
u_1(x)\in (x-a)^{\alpha}\cdot P_{m-1}(\Omega),
\end{equation*}
and
$$u_2(x)\in H^m(\Omega),$$
with
\begin{equation*}
u_2(x)=o\left((x-a)^{m-\frac{1}{2}}\right)
~\textrm{as~} x\rightarrow a.
\end{equation*}

\end{subequations}
\item
Let $\frac{1}{2}<\alpha<1$, and $p_2'=\frac{2}{2\alpha-1}$. Then
$u(x)\in I_{a+}^{\alpha}[W^{m,p_2'}(\Omega)],~m\geq 0,$
is equivalent to
\begin{equation*}
u(x)=u_1(x)+u_2(x),
\end{equation*}
where
\begin{subequations}
\begin{equation*}
u_1(x)\in (x-a)^{\alpha}\cdot P_{m-1}(\Omega),
\end{equation*}
and
$$u_2(x)\in H^{m+1}(\Omega),$$
with
\begin{equation*}
u_2(x)=o\left((x-a)^{m+\frac{1}{2}}\right)
~\textrm{as~} x\rightarrow a.
\end{equation*}
\end{subequations}
\end{enumerate}
\end{theorem}

\begin{proof}
We only show the proof when $\alpha\in(0,\frac{1}{2})$. The case of $\alpha\in(\frac{1}{2},1)$ can be obtained in a similar way.

We prove the result for $m\geq 1$, since when $m=0$, $P_{-1}(\Omega)=\{0\}$, the result is immediate by Theorem \ref{theorem:2.3.1}.

\emph{Necessity:}

If $u(x)\in I_{a+}^{\alpha}[W^{m,p_2}(\Omega)]$, then there exists
$v(x)= {}_{a}D_{x}^{\alpha} u(x)\in W^{m,p_2}(\Omega)$.
Also,
$$
v(x)
={}_{a}I_{x}^{m} D^mv(x)+\sum_{j=1}^{m}\frac{(x-a)^{m-j}}{\Gamma(1+m-j)}
\cdot D^{m-j}v(a)
:=v_2(x)+v_1(x),
$$
and
\begin{equation*}
u(x)
={}_{a}I_{x}^{m+\alpha} D^mv_2(x)
+(1+x)^{\alpha}p_{m-1}(x)
:=u_2(x)+u_1(x),
\end{equation*}
where $p_{m-1}(x)\in P_{m-1}(\Omega)$.

Since $D^mv(x)\in L_{p_2}(\Omega)$, $v_2(x)={}_{a}I_{x}^{m}D^mv(x) \in W^{m,p_2}(\Omega)$. By (\expandafter{\romannumeral4}) of Lemma \ref{lemma:2.2.4}, we have
\begin{equation*} v_2(x)=o\left((x-a)^{m-\alpha-\frac{1}{2}}\right)~~\textrm{as~~} x\rightarrow a;
\end{equation*}
also $u_2(x)={}_{a}I_{x}^{m+\alpha}D^mv(x)$ satisfies
$$u_2(x)=o\left((x-a)^{m-\frac{1}{2}}\right)~~\textrm{as~~} x\rightarrow a.$$
Thus,
\begin{equation}\label{equation3.1.4.18}
D^kv_2(x)\big|_{x=a}=0,~~k=0,1,\cdots,m-1.
\end{equation}
Then we have
\begin{equation}\label{equation3.1.4.19}
D^j u_2(x)=D^j {}_{a}I_{x}^{\alpha}v_2(x)
={}_{a}I_{x}^{\alpha}\left(D^jv_2(x)\right),~~j=0,1,\cdots,m.
\end{equation}

In addition, since $D^j v_2(x)\in L_{p_2}(\Omega)$, it can be yielded from Eq. (\ref{equation:2.3.3}) that
\begin{equation*}
D^j u_2(x)\in L_2(\Omega),~~j=0,1,\cdots,m.
\end{equation*}
That is, $u_2(x)\in H^{m}(\Omega)$.

\emph{Sufficiency:}

If $u_2(x)\in H^{m}(\Omega)$, then by Eq. (\ref{equation:2.2.7_1}),
$u_2(x)\in C^{m-1,\frac{1}{2}}(\Omega)$. Combined with
\begin{equation*}
u_2(x)=o\left((x-a)^{m-\frac{1}{2}}\right)
~\textrm{as~} x\rightarrow a,
\end{equation*}
we obtain
\begin{equation*}
D^ju_2(x)\big|_{x=a}=0,~~j=0,1,\cdots,m-1,
\end{equation*}
and
\begin{equation*}
D^k v_2(x):=D^k {}_{a}D_{x}^{\alpha}u_2(x)
={}_{a}I_{x}^{1-\alpha}\left(D^{k+1}u_2(x)\right),\quad k=0,1,\cdots,m-1.
\end{equation*}
Then by Lemma \ref{lemma:2.2.8}, we get
$$D^k v_2(x)\in W^{1,p_2}(\Omega),\quad k=0,1,\cdots,m-1.$$
Thus,
$v_2(x)={}_{a}D_{x}^{\alpha}u_2(x)=D{}_{a}I_{x}^{1-\alpha}u_2(x)\in W^{m,p_2}(\Omega)$.
Thereby,
$${}_{a}I_{x}^{1-\alpha}u_2(x)\in W^{m+1,p_2}(\Omega).$$
Since $u_2(x)$ is a continuous function,
$${}_{a}I_{x}^{1-\alpha}u_2(x)|_{x=a}=0.$$
Finally, by Theorem \ref{theorem:3.0.1}, we obtain
$$u(x)=u_1(x)+u_2(x)\in I_{a+}^{\alpha}[W^{m,p_2}(\Omega)].$$
\end{proof}

More generally, the following theorem holds. Here we omit the proof, since it is similar to the above analysis.

\begin{theorem}\label{theorem:3.1.3}
Let $0<\alpha<1$ and $1<p<\infty$. Then
$u(x)\in I_{a+}^{\alpha}[W^{m,p}(\Omega)],~~m\geq 0,$
is equivalent to
\begin{equation*}
u(x)=u_1(x)+u_2(x),
\end{equation*}
where
\begin{subequations}
\begin{equation*}
u_1(x)\in (x-a)^{\alpha}\cdot P_{m-1}(\Omega),
\end{equation*}
and
$$u_2(x)\in
\left\{
\begin{array}{lll}
W^{m,q}(\Omega),& \frac{1}{q}=\frac{1}{p}-\alpha,&
\alpha p<1,\vspace{5pt}\\
W^{m+1,q'}(\Omega),& \frac{1}{q'}=\frac{1}{p}+1-\alpha,&
\alpha p>1,
\end{array}
\right.
$$
with
\begin{equation*}
u_2(x)=o\left((x-a)^{m+\alpha-\frac{1}{p}}\right)
~\textrm{as~} x\rightarrow a.
\end{equation*}
\end{subequations}
\end{theorem}

\begin{remark}
Theorem \ref{theorem:3.1.3} tells the main difference between the space of fractional integrals of $W^{m,q}(\Omega)$ and the fractional derivative space which is defined as the closures of $C_0^{\infty}(\Omega)$ functions under the norm of fractional Sobolev spaces on $\mathbb{R}$: the behavior of the functions in $I_{a+}^{\alpha}[W^{m,p}(\Omega)]$ and $I_{b-}^{\alpha}[W^{m,p}(\Omega)]$ at both the nearby of the boundary as well as the points in the domain are clear.
\end{remark}

\subsubsection{Intersection of Left-side and Right-side Spaces}\label{section:3.1.2}
Let $0<\alpha<1$, $1<p<\infty$, and $m\geq 0$. It is not difficulty to see that, for any $p_{m-1}(x)\in P_{m-1}(\Omega)$, if
$$(b-x)^{\alpha}p_{m-1}(x)=o\left((x-a)^{m+\alpha-\frac{1}{p}}\right)
~\textrm{as~} x\rightarrow a ~\textrm{or~} x\rightarrow b,$$
only if $p_{m-1}(x)\equiv 0$.
So, with the help of Theorems \ref{theorem:3.1.1}-\ref{theorem:3.1.3}, we directly list the following result about the intersection of left-side and right-side spaces.
\begin{theorem}\label{theorem:3.1.2.1}
Let $0<\alpha<1$ and $1<p<\infty$. Then
$u(x)\in I_{a+}^{\alpha}[W^{m,p}(\Omega)]\cap I_{b-}^{\alpha}[W^{m,p}(\Omega)],~~m\geq 0,$
is equivalent to
$$u(x)\in
\left\{
\begin{array}{lll}
W^{m,q}(\Omega),& \frac{1}{q}=\frac{1}{p}-\alpha,&
\alpha p<1,\vspace{5pt}\\
W^{m+1,q'}(\Omega),& \frac{1}{q'}=\frac{1}{p}+1-\alpha,&
\alpha p>1.
\end{array}
\right.
$$
with
\begin{equation*}
u(x)=o\left((x-a)^{m+\alpha-\frac{1}{p}}\right)
~\textrm{as~} x\rightarrow a,\textrm{~and~}
u(x)=o\left((b-x)^{m+\alpha-\frac{1}{p}}\right)
~\textrm{as~} x\rightarrow b.
\end{equation*}
Especially:
\begin{enumerate}
\renewcommand{\labelenumi}{(\roman{enumi})}
\item
if $0\leq\alpha<\frac{1}{2}$ and $q_1=\frac{2}{1-2\alpha}$, then
\begin{eqnarray*}
&&I_{a+}^{\alpha}[H^{m}(\Omega)]\cap I_{b-}^{\alpha}[H^{m}(\Omega)]
\nonumber\\
&=&\bigg\{
f:f(x)\in W^{m,q_1}(\Omega),
f(x)=o((x-a)^{m+\alpha-\frac{1}{2}})~\textrm{as}~x\rightarrow a,
f(x)=o((b-x)^{m+\alpha-\frac{1}{2}})~\textrm{as}~x\rightarrow b\bigg\}.
\end{eqnarray*}
\item
if $\frac{1}{2}<\alpha<1$ and $q_1'=\frac{2}{3-2\alpha}$, then
\begin{eqnarray*}
&&I_{a+}^{\alpha}[H^{m}(\Omega)]\cap I_{b-}^{\alpha}[H^{m}(\Omega)]
\nonumber\\
&=&\bigg\{
f:f(x)\in W^{m+1,q_1'}(\Omega),
f(x)=o((x-a)^{m+\alpha-\frac{1}{2}})~\textrm{as}~x\rightarrow a,
f(x)=o((b-x)^{m+\alpha-\frac{1}{2}})~\textrm{as}~x\rightarrow b\bigg\}.
\end{eqnarray*}
\item
if $0<\alpha<\frac{1}{2}$, and $p_2=\frac{2}{1+2\alpha}$, then
\begin{eqnarray*}
&&I_{a+}^{\alpha}[W^{m,p_2}(\Omega)]\cap I_{b-}^{\alpha}[W^{m,p_2}(\Omega)]
\nonumber\\
&=&\bigg\{
f:f(x)\in H^{m}(\Omega),
f(x)=o((x-a)^{m-\frac{1}{2}})~\textrm{as}~x\rightarrow a,
f(x)=o((b-x)^{m-\frac{1}{2}})~\textrm{as}~x\rightarrow b\bigg\}.
\end{eqnarray*}
\item
if $\frac{1}{2}<\alpha<1$, and $p_2'=\frac{2}{2\alpha-1}$, then
\begin{eqnarray*}
&&I_{a+}^{\alpha}[W^{m,p_2'}(\Omega)]\cap I_{b-}^{\alpha}[W^{m,p_2'}(\Omega)]
\nonumber\\
&=&\bigg\{
f:f(x)\in H^{m+1}(\Omega),
f(x)=o((x-a)^{m+\frac{1}{2}})~\textrm{as}~x\rightarrow a,
f(x)=o((b-x)^{m+\frac{1}{2}})~\textrm{as}~x\rightarrow b\bigg\}.
\end{eqnarray*}
\end{enumerate}
\end{theorem}

\begin{remark}\label{remark:3.1.2.1}
As we mentioned in Remark \ref{remark:3.0.1}, one can easily check by Theorem \ref{theorem:3.1.2.1} that, for $m\geq 0$, and $1<p<\infty$,
$$I_{a+}^\alpha[W^{m,p}(\Omega)]\cap I_{b-}^\alpha[W^{m,p}(\Omega)]
\subseteq W^{m,p}(\Omega).$$
What is more, by Eq. (\ref{equation:3.1.1}), if $u(x)\in I_{a+}^\alpha[W^{m,p}(\Omega)]\cap I_{b-}^\alpha[W^{m,p}(\Omega)]$, then
for $k=0,1,\cdots,m$,
\begin{equation}\label{equation:3.1.2.1}
\|D^k u\|_{L_p(\Omega)}
\leq C\|D^k u\|_{I_{a+}^\alpha[L_p(\Omega)]}
\leq C|{}_{a}D_{x}^\alpha u|_{W^{k,p}(\Omega)},
\end{equation}
\begin{equation}\label{equation:3.1.2.2}
\|D^k u\|_{L_p(\Omega)}
\leq C\|D^k u\|_{I_{b-}^\alpha[L_p(\Omega)]}
\leq C|{}_{x}D_{b}^\alpha u|_{W^{k,p}(\Omega)},
\end{equation}
because of $[D^j {}_{a}D_{x}^\alpha u](x)|_{x=a}
=[D^j {}_{x}D_{b}^\alpha u](x)|_{x=b}=0$, for $j=0,1,\cdots,m-1.$ Therefore, we have
$$\|u\|_{W^{m,p}(\Omega)}\leq C\|{}_{a}D_{x}^\alpha u\|_{W^{m,p}(\Omega)},$$
and
$$\|u\|_{W^{m,p}(\Omega)}\leq C\|{}_{x}D_{b}^\alpha u\|_{W^{m,p}(\Omega)}.$$
\end{remark}

\subsubsection{Approximation Property}\label{section:3.1.3}

The idea we want to apply the image spaces defined above in numerical methods to fractional differential equations is that, instead of approximating the solution (which belongs to some image space such as $I_{a+}^{\beta}[W^{m,p}(\Omega)]$) in a classical finite-dimensional space, we approximate it in $I_{a+}^{\beta}[P_N(\Omega)]$ for some given value $\beta$. That is, we use a polynomial to approximate a function in classical Sobolev space $W^{m,p}(\Omega)$ just exactly as the classical way, and then take it as the integrand of a Riemann-Liouville fractional operator. As a matter of fact, when applied in numerical methods to fractional differential equations, the whole process can be guaranteed by the two formulae in Remark \ref{remark:4}. In this subsection, we show some basic corresponding approximation results.

Denote $$I_{a+}^{\alpha}\left[P_{N}(\Omega)\right]:=\left\{f:f(x)={}_{a}I_{x}^\alpha \varphi(x), \varphi(x)\in P_{N}(\Omega), x\in\Omega\right\}.$$
It is obvious that
$$I_{a+}^{\alpha}\left[P_{N}(\Omega)\right] \subseteq I_{a+}^{\alpha}[L_2(\Omega)].$$
Denote $\Pi_N$ as the orthogonal projection operator from  $L_2(\Omega)$ onto $P_{N}(\Omega)$. Denote $I_N$ as the interpolation operator from from  $L_2(\Omega)$ onto $P_{N}(\Omega)$. We recall that for any function $v(x)\in H^m(\Omega)$, there exit constants $C_1=C_1(\Omega,m)$ and $C_2=C_2(\Omega,m)$, such that~\cite{Hesthaven:07}
\begin{equation}\label{equation:3.1.3.1}
\|v(x)-\Pi_N v(x)\|_{L_2(\Omega)}\leq C_1N^{-m}|v|_{H^m(\Omega)},
\end{equation}
and
\begin{equation}\label{equation:3.1.3.2}
\|v(x)-I_N v(x)\|_{L_2(\Omega)}\leq C_2 N^{-m}|v|_{H^m(\Omega)}.
\end{equation}

Then the following
approximation properties hold:
\begin{theorem}\label{theorem:3.1.3.1}
If $\alpha\in(0,\frac{1}{2})\cup (\frac{1}{2},1)$, and $u(x)\in I_{a+}^{\alpha}[H^m(\Omega)]$, then there exists a constant $C=C(\alpha,\Omega,m)$, such that
\begin{equation}\label{equation:lemma5}
\|u-Q_N^{\alpha} u\|_{L_2(\Omega)}\leq C N^{-m}\|\,_{a}D_{x}^{\alpha} u\|_{H^m(\Omega)},
\end{equation}
where $Q_N^\alpha u(x)$ denotes $\,_{a}I_{x}^{\alpha}\left(\Pi_N \,_{a}D_{x}^{\alpha}u\right)(x)$ or $\,_{a}I_{x}^{\alpha}\left(I_N \,_{a}D_{x}^{\alpha}u\right)(x)$.
\end{theorem}
\begin{proof}
If $u(x)\in I_{a+}^{\alpha}[H^m(\Omega)]$, then there is $v(x)=\,_{a}D_{x}^{\alpha}u(x)\in H^m(\Omega)$, such that $u(x)=\,_{a}I_{x}^{\alpha}v(x)$. Also, $u-Q_N^{\alpha} u\in I_{a+}^{\alpha}[L_2(\Omega)]$.

By (\expandafter{\romannumeral2}) and (\expandafter{\romannumeral3}) of Lemma \ref{lemma:2.2.4},
$$u-Q_N^{\alpha} u(x)= \,_{a}I_{x}^{\alpha}[v-Q_N v](x)\in L_q(\Omega),$$
where $q=\frac{2}{1-2\alpha}>2$ if $\alpha\in(0,\frac{1}{2})$; $q$ be any positive number if $\alpha\in(\frac{1}{2},1)$, and we take $q>2$. So,
$$\|u-Q_N^{\alpha} u\|_{L_2(\Omega)}
\leq C(\alpha)\|u-Q_N^{\alpha} u\|_{L_q(\Omega)}
=C(\alpha)\cdot\|\,_{a}I_{x}^{\alpha}[v-Q_N v]\|_{L_q(\Omega)}
\leq C(\alpha,\Omega)\cdot\|v-Q_N v\|_{L_2(\Omega)}.$$
Then by Eqs. (\ref{equation:3.1.3.1}) and (\ref{equation:3.1.3.2}), there exists a constant $C=C(\alpha,\Omega,m)$, such that
$$
\|u-Q_N^{\alpha} u\|_{L_2(\Omega)}
\leq CN^{-m}|v|_{H^m(\Omega)}
=CN^{-m}|\,_{a}D_{x}^{\alpha}u(x)|_{H^m(\Omega)}
\leq CN^{-m}\|\,_{a}D_{x}^{\alpha}u(x)\|_{H^m(\Omega)}.
$$
\end{proof}

\begin{theorem}\label{theorem:3.1.3.1}
Let $\alpha\in(0,\frac{1}{2})$, $p_2=\frac{2}{1+2\alpha}$. If $u(x)\in I_{a+}^{\alpha}[W^{m,p_2}(\Omega)]\cap I_{b-}^{\alpha}[W^{m,p_2}(\Omega)]$, then there exists a constant $C_1=C_1(\alpha,\Omega,m)$, such that
\begin{equation}\label{equation:lemma5}
\|u-Q_N u\|_{L_2(\Omega)}
\leq C_1 N^{-m}\|\,_{a}D_{x}^{\alpha} u\|_{W^{m,p_2}(\Omega)}
(\textrm{~or~}\leq C_1 N^{-m}\|\,_{x}D_{b}^{\alpha} u\|_{W^{m,p_2}(\Omega)}),
\end{equation}
where $Q_N u(x)$ denotes $\Pi_N u(x)$ or $I_N u(x)$.

Let $\alpha\in(\frac{1}{2},1)$, $p_2'=\frac{2}{2\alpha-1}$. If $u(x)\in I_{a+}^{\alpha}[W^{m,p_2'}(\Omega)]\cap I_{b-}^{\alpha}[W^{m,p_2'}(\Omega)]$, then there exists a constant $C_2=C_2(\alpha,\Omega,m)$, such that
\begin{equation}\label{equation:lemma5}
\|u-Q_N u\|_{L_2(\Omega)}
\leq C_2 N^{-m-1}\|\,_{a}D_{x}^{\alpha} u\|_{W^{m,p_2'}(\Omega)}
(\textrm{~or~}\leq C_2 N^{-m-1}\|\,_{x}D_{b}^{\alpha} u\|_{W^{m,p_2'}(\Omega)}).
\end{equation}
\end{theorem}
\begin{proof}
We only show the proof for the case when $\alpha\in(0,\frac{1}{2})$.
By Theorem \ref{theorem:3.1.2.1}, $u\in H^{m}(\Omega)$. Then
by Eqs. (\ref{equation:3.1.3.1}) and (\ref{equation:3.1.3.2}),
$$\|u-Q_Nu\|_{L_2(\Omega)}\leq CN^{-m}|D^m u|_{L_2(\Omega)}.$$
Together with Eqs. (\ref{equation:3.1.2.1}) and (\ref{equation:3.1.2.2}), we have
$$\|u-Q_Nu\|_{L_2(\Omega)}
\leq CN^{-m}|\,_{a}D_{x}^{\alpha} u|_{W^{m,p_2}(\Omega)}
\leq C_1N^{-m}\|\,_{a}D_{x}^{\alpha} u\|_{W^{m,p_2}(\Omega)}.$$
\end{proof}

Here we demonstrate two simple examples to numerically show the $L_2$ errors of $|u(x)-\,_{a}I_{x}^{\alpha}\left(\Pi_N \,_{a}D_{x}^{\alpha}u\right)(x)|$, where $\Omega=[-1,1]$, $u(x)\in I_{(-1)+}^{\alpha}[H^m(\Omega)]$, with $u(x)=1\in I_{(-1)+}^{\alpha}[L_2(\Omega)]$ for $0<\alpha<\frac{1}{2}$ by Remark \ref{remark:3}, and $u(x)=u_1(x)+u_2(x)\in I_{(-1)+}^{\alpha}[H^3(\Omega)]$, $u_1(x)=(1+x)^\alpha x^2$,
  $u_2(x)=\left\{
    \begin{array}{ll}
      -\frac{(x+1)^{3}}{6}   &~ [-1,0], \\
      \frac{2x^{3}-(x+1)^{3}}{6}   &~(0,1],
    \end{array}\right.$
respectively.

\begin{remark}
The reason why we choose these two specific test functions is that their convergence rates demonstrated in Figure \ref{fig1} just correspond to those in their classical cases, i.e., let $\alpha=0$. As a matter of fact, for classical situation, readers can test any jump function or function whose third derivative is a jump function, say $v(x)$, to check that the convergence rate of $|v(x)-Q_N v(x)|$ is similar as that in Figure \ref{fig1}, but not contradict to the classical results in (\ref{equation:3.1.3.1}) and (\ref{equation:3.1.3.2}).
\end{remark}

\begin{figure}[h!]
\centering
\subfloat[$u(x)=1$]{\includegraphics[scale=0.19]{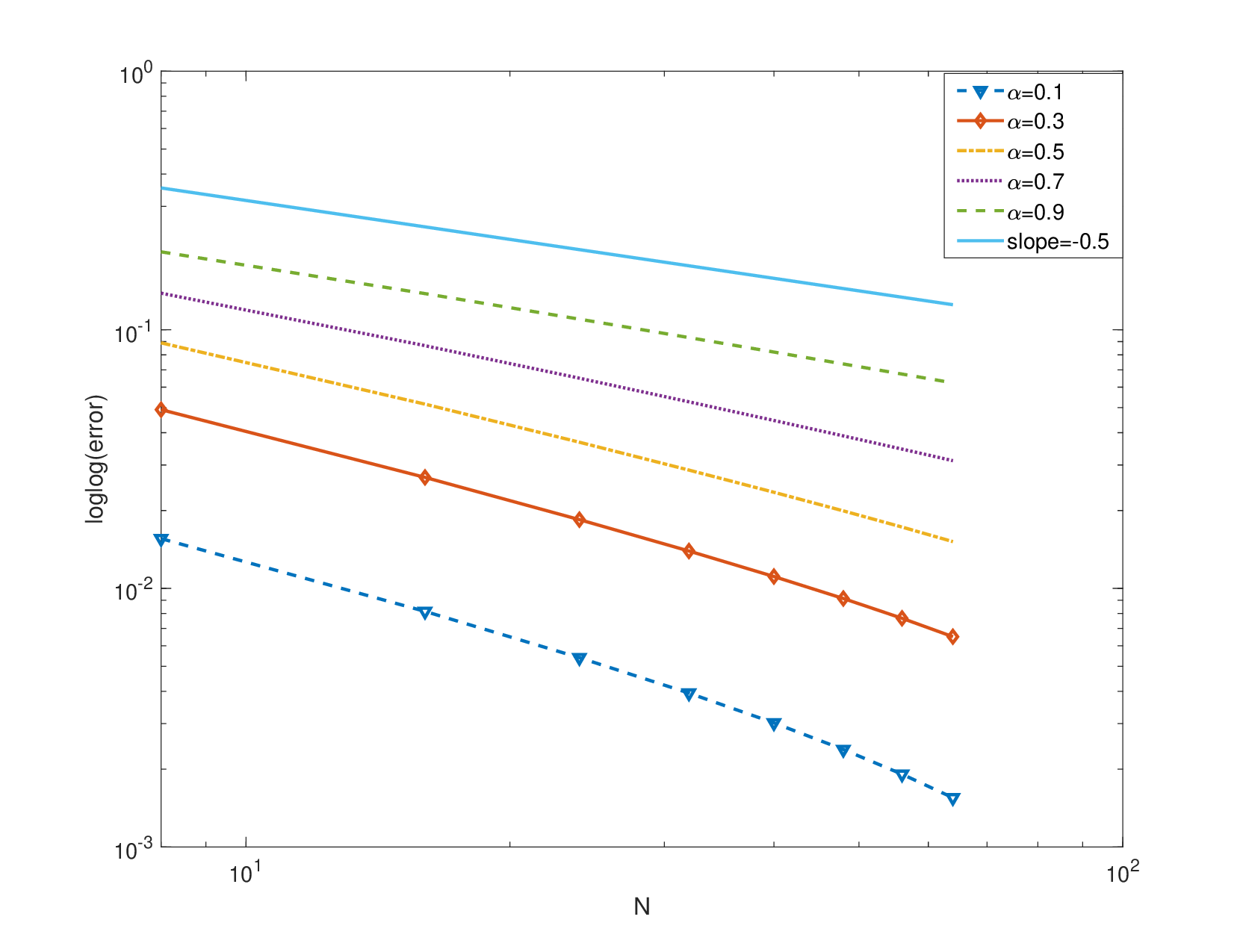}}
\subfloat[$u(x)=u_1(x)+u_2(x)$.]{\includegraphics[scale=0.3]{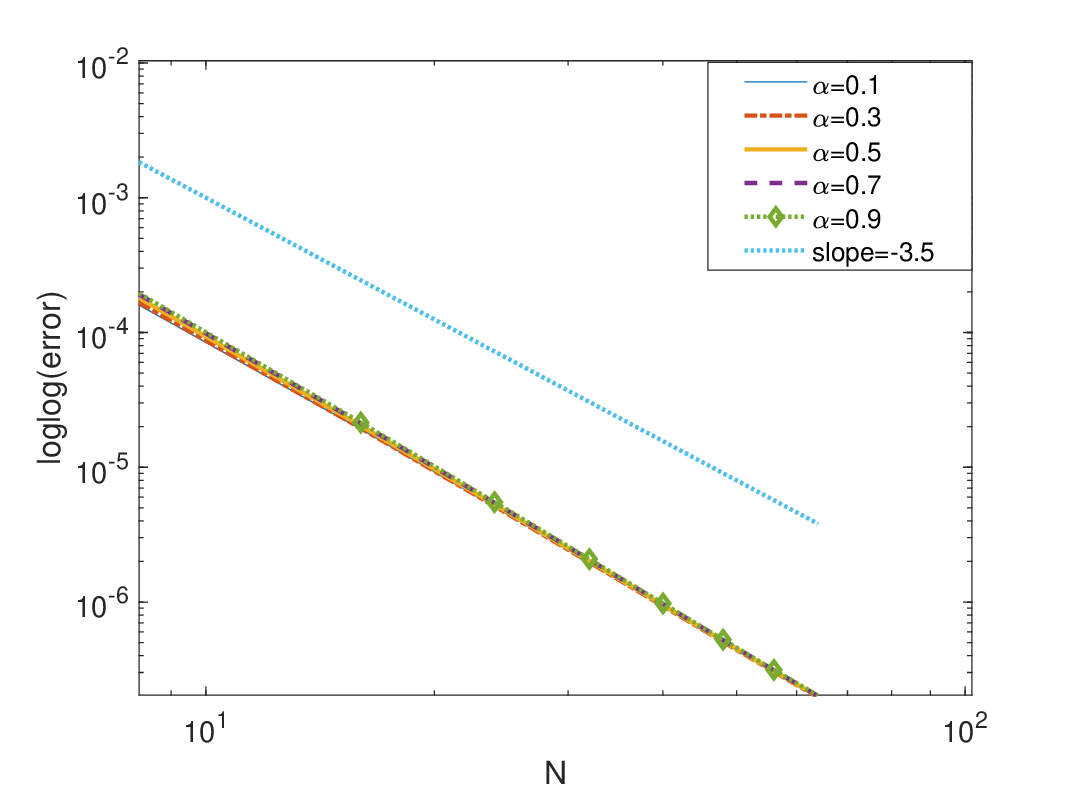}}
\caption{The numerical $L_2$ errors $|u(x)-\,_{a}I_{x}^{\alpha}\left(\Pi_N \,_{a}D_{x}^{\alpha}u\right)(x)|$  vs the polynomial degree N for different $\alpha\in(0,1)$.}
\label{fig1}
\end{figure}

\section{Application on Solving Fractional Equations}\label{section:4}

In this section, we try to offer a starting point on how to make use of the image space $I_{a+}^{\alpha}[W^{m,p}(\Omega)]$ or $I_{b-}^{\alpha}[W^{m,p}(\Omega)]$ when numerically solving fractional equations. As a matter of fact, Theorem \ref{theorem:3.1.3} or Theorems \ref{theorem:3.1.1} and \ref{theorem:3.1.2} can be taken advantage in a very flexible way when solving fractional equations just by choosing suitable parameter $\alpha$. From a mathematical view, by the discussions in above sections, two-sided fractional problems are actually even easier to handle during the approximation comparing with the one-sides problems, just by ignoring the term $(x-a)^\alpha p_{m-1}(x)$ or $(b-x)^\alpha p_{m-1}(x)$. 
So here we only provide some instructions on how to solve the following typical problems:
\begin{equation}\label{equation4.0.0}
\left\{ \begin{array}{lll}
{}_{a}D_{x}^{\gamma_1}u(x)+ d\cdot{}_{a}D_{x}^{\gamma_2}u(x)=h(x),\\
\textrm{given~conditions~on~}\partial\Omega,
\end{array} \right.
\end{equation}
with $0<\gamma_2<\gamma_1<2$. Also we set aside on discussing the physical meaning of the boundary conditions.

It is clear that Problem (\ref{equation4.0.0}) can be analytically solved in theory, and thus the regularity of $u(x)$ is clear, if $\gamma_1$, $\gamma_2$ and $h(x)$ are fixed. Therefore, in the following part, for different classifications of $\gamma_1$, $\gamma_2$ and $h(x)$, we discuss how to choose suitable basis functions when approximating the solution. Without loss of generality, we suppose $\Omega=[-1,1]$.

Here we only discuss four special cases.

\textbf{Case 1. $u(x)\in H^m(\Omega)$}:

By Theorem \ref{theorem:3.1.3} or Theorem \ref{theorem:3.1.1}, if $u(x)\in H^m(\Omega)$, $m\geq 2$, then ${}_{-1}D_{x}^{\gamma}u(x)=D^2{}_{-1}I_{x}^{2-\gamma}u(x)=\tilde{h}_1(x,\gamma)+\tilde{h}_2(x,\gamma)$, where
$$\tilde{h}_1(x,\gamma)\in(x+1)^{-\gamma}P_{m-1}(x),$$
and
$$\tilde{h}_2(x,\gamma)=o((x+1)^{m-\gamma-\frac{1}{2}})~\textrm{as~} x\rightarrow -1,$$
with
$$\tilde{h}_2(x,\gamma)\in
\left\{
\begin{array}{lll}
W^{m-1,q_1'}(\Omega),& q_1'=\frac{2}{2\gamma-1},&
\gamma\in(1,\frac{3}{2}),\vspace{5pt}\\
W^{m-2,q_1}(\Omega),& q_1=\frac{2}{2\gamma-3},&
\gamma\in(\frac{3}{2},2).
\end{array}
\right.
$$
Since $\tilde{h}_2(x,\gamma_1)$ implies $\tilde{h}_2(x,\gamma_2)$ when $\gamma_1>\gamma_2$, so if $u(x)\in H^m(\Omega)$, then $h(x)$ in Problem (\ref{equation4.0.0}) can be rewritten as \begin{equation}\label{equation:4.0.1_1}
h(x)=h_1(x,\gamma_1)+h_1(x,\gamma_2)+h_2(x,\gamma_1),
\end{equation}
where
\begin{subequations}
\begin{equation}\label{equation:4.0.1}
h_1(x,\gamma_1)\in(x+1)^{-\gamma_1}P_{m-1}(x),
\end{equation}
\begin{equation}\label{equation:4.0.2}
h_1(x,\gamma_2)\in(x+1)^{-\gamma_2}P_{m-1}(x),
\end{equation}
and
\begin{equation}\label{equation:4.0.3}
h_2(x,\gamma_1)=o((x+1)^{m-\gamma_1-\frac{1}{2}})~\textrm{as~} x\rightarrow -1,
\end{equation}
with
\begin{equation}\label{equation:4.0.4}
h_2(x,\gamma_1)\in
\left\{
\begin{array}{lll}
W^{m-1,q_1'}(\Omega_1),& q_1'=\frac{2}{2\gamma_1-1},&
\gamma_1\in(1,\frac{3}{2}),\vspace{5pt}\\
W^{m-2,q_1}(\Omega),& q_1=\frac{2}{2\gamma_1-3},&
\gamma_1\in(\frac{3}{2},2).
\end{array}
\right.
\end{equation}
\end{subequations}
On the other hand, we can also in fact prove similarly to the analysis in above sections that: if $\gamma_1$, $\gamma_2$, and $h(x)$ in Problem (\ref{equation4.0.0}) satisfies Eqs. (\ref{equation:4.0.1_1})-(\ref{equation:4.0.4}), then $u(x)\in H^m(\Omega)$.

\textbf{So, if these are the cases for $\gamma_1$, $\gamma_2$, and $h(x)$ in Problem (\ref{equation4.0.0}), then we let
$$ u(x)\approx u_N(x):=\sum_{n=0}^N u_n\cdot L_n(x), \textrm{~and~$u_N(x)$~satisfies~given~conditions~on~}\partial\Omega,$$
where $\{u_n\}_{n=0}^N$ is a set of coefficients to be determined, and $\{L_n(x)\}_{n=0}^{\infty}$ is a set of Legendre polynomials which are orthogonal basis functions in $L_2([-1,1])$ space~\cite{Hesthaven:07}.}
%

\textbf{Case 2. $u(x)\in I_{(-1)+}^{\beta}[H^m(\Omega)]$, $0<\beta<1$}:

Given $0<\beta<1$. By Theorem \ref{theorem:3.1.1} and Theorem \ref{theorem:3.1.3}, if $u(x)\in I_{(-1)+}^{\beta}[H^m(\Omega)]$, $m\geq 2$, then ${}_{-1}D_{x}^{\gamma}u(x)=D^2{}_{-1}I_{x}^{2-\gamma}u(x)=\hat{h}_1(x,\gamma,\beta)+\hat{h}_2(x,\gamma,\beta)$, where
$$\hat{h}_1(x,\gamma,\beta)\in(x+1)^{-\gamma+\beta}P_{m-1}(x),$$
and
$$\hat{h}_2(x,\gamma,\beta)=o((x+1)^{m-\gamma+\beta-\frac{1}{2}})~\textrm{as~} x\rightarrow -1,$$
with
$$\hat{h}_2(x,\gamma,\beta)\in
\left\{
\begin{array}{lll}
W^{m-2,p_1}(\Omega),& \frac{1}{p_1}=\gamma-\beta-\frac{3}{2},&
\gamma>\beta+\frac{3}{2},\vspace{5pt}\\
W^{m-1,p_1'}(\Omega),& \frac{1}{p_1'}=\gamma-\beta-\frac{1}{2},&
\gamma<\beta+\frac{3}{2},
\end{array}
\right.
$$
when $\beta\in(0,\frac{1}{2})$; and
$$\hat{h}_2(x,\gamma,\beta)\in
\left\{
\begin{array}{lll}
W^{m-2,p_1}(\Omega),& \frac{1}{p_1}=\gamma-\beta-\frac{1}{2},&
\gamma>\beta+\frac{1}{2},\vspace{5pt}\\
W^{m-1,p_1'}(\Omega),& \frac{1}{p_1'}=\gamma-\beta+\frac{1}{2},&
\gamma<\beta+\frac{1}{2}.
\end{array}
\right.
$$
when $\beta\in(\frac{1}{2},1)$.

Since $\hat{h}_2(x,\gamma_1)$ implies $\hat{h}_2(x,\gamma_2)$ when $\gamma_1>\gamma_2$, so if $u(x)\in I_{(-1)+}^{\beta}[H^m(\Omega)]$, then $h(x)$ in Problem (\ref{equation4.0.0}) can be rewritten as \begin{equation}\label{equation:4.1.1_1}
h(x)=\bar{h}_1(x,\gamma_1,\beta)+\bar{h}_1(x,\gamma_2,\beta)+\bar{h}_2(x,\gamma_1,\beta),
\end{equation}
where
\begin{subequations}
\begin{equation}\label{equation:4.1.1}
\bar{h}_1(x,\gamma_1,\beta)\in(x+1)^{-\gamma_1+\beta}P_{m-1}(x),
\end{equation}
\begin{equation}\label{equation:4.1.2}
\bar{h}_1(x,\gamma_2,\beta)\in(x+1)^{-\gamma_2+\beta}P_{m-1}(x),
\end{equation}
and
\begin{equation}\label{equation:4.1.3}
\bar{h}_2(x,\gamma_1,\beta)=o((x+1)^{m-\gamma_1+\beta-\frac{1}{2}})~\textrm{as~} x\rightarrow -1,
\end{equation}
with
\begin{equation}\label{equation:4.1.4}
\bar{h}_2(x,\gamma_1,\beta)\in
\left\{
\begin{array}{lll}
W^{m-2,p_1}(\Omega),& \frac{1}{p_1}=\gamma_1-\beta-\frac{3}{2},&
\gamma_1>\beta+\frac{3}{2},\vspace{5pt}\\
W^{m-1,p_1'}(\Omega),& \frac{1}{p_1'}=\gamma_1-\beta-\frac{1}{2},&
\gamma_1<\beta+\frac{3}{2},
\end{array}
\right.
\end{equation}
when $\beta\in(0,\frac{1}{2})$; and
\begin{equation}\label{equation:4.1.5}
\bar{h}_2(x,\gamma_1,\beta)\in
\left\{
\begin{array}{lll}
W^{m-2,p_1}(\Omega),& \frac{1}{p_1}=\gamma_1-\beta-\frac{1}{2},&
\gamma_1>\beta+\frac{1}{2},\vspace{5pt}\\
W^{m-1,p_1'}(\Omega),& \frac{1}{p_1'}=\gamma_1-\beta+\frac{1}{2},&
\gamma_1<\beta+\frac{1}{2}.
\end{array}
\right.
\end{equation}
when $\beta\in(\frac{1}{2},1)$.
\end{subequations}

\textbf{So if $\gamma_1$, $\gamma_2$, and $h(x)$ in Problem (\ref{equation4.0.0}) satisfies Eqs. (\ref{equation:4.1.1_1})-(\ref{equation:4.1.5}), then we let
$$u(x)\approx \bar{u}_N(x):=\sum_{n=0}^N\bar{u}_n\cdot {}_{-1}I_{x}^{\beta}L_n(x)
=(1+x)^{\beta}\sum_{n=0}^N \frac{\Gamma(n+1)}{\Gamma(n+1+\beta)}\cdot\bar{u}_n J_n^{-\beta,\beta}(x),$$
and $\bar{u}_N(x)$ satisfy given conditions on $\partial\Omega$ at the same time, where $\{\bar{u}_n\}_{n=0}^N$ is a set of coefficients to be determined, and $\{J_{n}^{-\beta,\beta}(x)\}_{n=0}^N$ are Jacobi polynomials~\cite{Askey:75}.}
\begin{remark}
In this case, $\beta$ is often chosen as $\beta=\frac{\gamma_1}{2}$ in the variational weak formulation.
\end{remark}

\textbf{Case 3. $u(x)\in I_{(-1)+}^{\gamma_1}[H^{m}(\Omega)]$}:

If $u(x)\in I_{(-1)+}^{\gamma_1}[H^{m}(\Omega)]$, $m\geq 1$,
then
$h(x)\in  H^{m}(\Omega) \cap I_{(-1)+}^{\gamma_1-\gamma_2}[H^{m}(\Omega)]$. Since by Theorem \ref{theorem:3.1.1} or Theorem \ref{theorem:3.1.3}, $v(x)\in I_{(-1)+}^{\gamma_1-\gamma_2}[H^{m}(\Omega)]$ if and only if $v(x)=v_1(x)+v_2(x)$, where
$$v_1(x)\in(x+1)^{\gamma_1-\gamma_2}P_{m-1}(x),$$
and
$$v_2(x)=o((x+1)^{m+\gamma_1-\gamma_2-\frac{1}{2}})~\textrm{as~} x\rightarrow -1,$$
with
$$v_2(x)\in
\left\{
\begin{array}{lll}
W^{m,p_1}(\Omega),& p_1=\frac{2}{1-2(\gamma_1-\gamma_2)},&
\gamma_1-\gamma_2\in(0,\frac{1}{2}),\vspace{5pt}\\
W^{m+1,p_1'}(\Omega),& p_1'=\frac{2}{3-2(\gamma_1-\gamma_2)},&
\gamma_1-\gamma_2\in(\frac{1}{2},1),\vspace{5pt}\\
W^{m+1,q_1}(\Omega),& q_1=\frac{2}{3-2(\gamma_1-\gamma_2)},&
\gamma_1-\gamma_2\in(1,\frac{3}{2}),\vspace{5pt}\\
W^{m+2,q_1'}(\Omega),& q_1'=\frac{2}{5-2(\gamma_1-\gamma_2)},&
\gamma_1-\gamma_2\in(\frac{3}{2},2).
\end{array}
\right.$$
Of course, $v_2(x)\in H^m(\Omega)$.

\textbf{So, if}
\begin{equation}\label{equation:4.3.1_1}
h(x)=\check{h}_1(x,\gamma_1,\gamma_2)+\check{h}_2(x,\gamma_1,\gamma_2),
\end{equation}
\textbf{where}
\begin{subequations}
\begin{equation}\label{equation:4.3.1}
\check{h}_1(x,\gamma_1,\gamma_2)\in(x+1)^{\gamma_1-\gamma_2}P_{m-1}(x),
\end{equation}
\textbf{and}
\begin{equation}\label{equation:4.3.2}
\check{h}_2(x,\gamma_1,\gamma_2)\in H^m(\Omega),~~~
\check{h}_2(x,\gamma_1,\gamma_2)=o((x+1)^{m+\gamma_1-\gamma_2-\frac{1}{2}})~\textrm{as~} x\rightarrow -1,
\end{equation}
\end{subequations}
\textbf{then we let
$$u(x)\approx \check{u}_N(x):=\sum_{n=0}^N\check{u}_n\cdot {}_{-1}I_{x}^{\gamma_1}L_n(x)
=(1+x)^{\gamma_1}\sum_{n=0}^N \frac{\Gamma(n+1)}{\Gamma(n+1+\gamma_1)}\cdot\check{u}_n J_n^{-\gamma_1,\gamma_1}(x),$$
and $\bar{u}_N(x)$ satisfies given conditions on $\partial\Omega$ at the same time, where $\{\check{u}_n\}_{n=0}^N$ is a set of coefficients to be determined.}

\textbf{Case 4. $u(x)\in W^{m,p}(\Omega)$, with $p=\frac{2}{3-2\gamma_1}$ when $\gamma_1\in(1,\frac{3}{2})$,
$p=\frac{2}{5-2\gamma_1}$ when $\gamma_1\in(\frac{3}{2},2)$}:

If $u(x)\in W^{m,p}(\Omega)$, with $p=\frac{2}{3-2\gamma_1}$ when $\gamma\in(1,\frac{3}{2})$,
$p=\frac{2}{5-2\gamma}$ when $\gamma\in(\frac{3}{2},2)$, then by Theorem \ref{theorem:3.1.2} or Theorem \ref{theorem:3.1.3},
${}_{-1}D_{x}^{\gamma}u(x)=D^2{}_{-1}I_{x}^{2-\gamma}u(x)=\acute{h}_1(x,\gamma)+\acute{h}_2(x,\gamma)$, where
$$\acute{h}_1(x,\gamma)\in(x+1)^{-\gamma}P_{m-1}(x),$$
and $\acute{h}_2(x,\gamma)\in H^{m-1}$, with
$$\acute{h}_2(x,\gamma)=o((x+1)^{m-\frac{3}{2}})~\textrm{as~} x\rightarrow -1,$$
when $\gamma\in(1,\frac{3}{2})$; and $\acute{h}_2(x,\gamma)\in H^{m-2}$ with
$$\acute{h}_2(x,\gamma)=o((x+1)^{m-\frac{5}{2}})~\textrm{as~} x\rightarrow -1,$$
when $\gamma\in(\frac{3}{2},2)$.

Since $\acute{h}_2(x,\gamma_1)$ implies $\acute{h}_2(x,\gamma_2)$ when $\gamma_1>\gamma_2$, so if $u(x)\in W^{m,p}(\Omega)$, then $h(x)$ in Problem (\ref{equation4.0.0}) can be rewritten as \begin{equation}\label{equation:4.4.1_1}
h(x)=\grave{h}_1(x,\gamma_1)+\grave{h}_1(x,\gamma_2)+\grave{h}_2(x,\gamma_1),
\end{equation}
where
\begin{subequations}
\begin{equation}\label{equation:4.4.1}
\grave{h}_1(x,\gamma_1)\in(x+1)^{-\gamma_1}P_{m-1}(x),
\end{equation}
\begin{equation}\label{equation:4.4.2}
\grave{h}_1(x,\gamma_2)\in(x+1)^{-\gamma_2}P_{m-1}(x),
\end{equation}
and
\begin{equation}\label{equation:4.4.3}
\grave{h}_2(x,\gamma_1)\in
H^{m-1}(\Omega),~~\grave{h}_2(x,\gamma_1)=o((x+1)^{m-\frac{3}{2}})~\textrm{as~} x\rightarrow -1,
\end{equation}
when $\gamma_1\in(1,\frac{3}{2})$; and
\begin{equation}\label{equation:4.4.5}
\grave{h}_2(x,\gamma_1)\in
H^{m-2}(\Omega),~~\grave{h}_2(x,\gamma_1)=o((x+1)^{m-\frac{5}{2}})~\textrm{as~} x\rightarrow -1,
\end{equation}
when $\gamma_1\in(\frac{3}{2},2)$.
\end{subequations}

\textbf{So, if $\gamma_1$, $\gamma_2$, and $h(x)$ in Problem (\ref{equation4.0.0}) satisfy Eqs. (\ref{equation:4.4.1_1})-(\ref{equation:4.4.5}), then we let
$$u(x)\approx \tilde{u}_N(x):=p_{m-1}(x)+\sum_{n=0}^N\tilde{u}_n\cdot {}_{-1}D_{x}^{2-\gamma_1}\phi_n(x),$$
and $\tilde{u}_N(x)$ satisfies given conditions on $\partial\Omega$ at the same time, where $p_{m-1}(x)$ and $\{\tilde{u}_n\}_{n=0}^N$ are to be determined, and $\{\phi_n(x)\}_{n=0}^N$ are liner combinations of Legendre polynomials which satisfy $\phi_n^{k}(-1)=0,~k=0,1,\cdots,m,$ when $\gamma_1\in(1,\frac{3}{2})$; and $\phi_n^{k}(-1)=0,~k=0,1,\cdots,m-1,$ when $\gamma_1\in(\frac{3}{2},2)$.}

\begin{remark}
In fact, (\ref{equation4.0.0}) can be generalized to linear fractional differential equation with sequential or multi-term derivatives as discussed in~\cite{Podlubny:99}, which can be likewise be studied without additional difficulty.
\end{remark}
\section{Conclusion}\label{section:5}
Fractional derivative operators, which are widely used in abnormal and non-local models, are actually pseudo-differential. The key difficulty lies in the Riemann-Liouville integral part of the operators.
In this paper, we discuss the regularity properties of the fractional operators by using the image spaces of Riemann-Liouville integral operators on $W^{m,p}(\Omega)$. Many useful results can be obtained without tedious proof thanks to the special properties of the defined spaces. Functions in the image spaced introduce in this paper consist two parts: one part is smooth in the domain but possesses specific regularity at the boundary point; another part shows the regularity in the domain and the asymptotic behavior tending to zero near the boundary point---all of which can be represented by functions in classical Sobolev spaces. So the image spaces we discussed distinguish the fractional derivative spaces that are defined as the closures of $C_0^\infty(\Omega)$ functions under the norm of fractional Sobolev spaces on $\mathbb{R}$.
What is more, the tempered operators and the corresponding Riemann-Liouville ones show to be reciprocal in the image spaces of Riemann-Liouville integrals on $L^{p}(\Omega)$, which is expected to make some efforts on theoretical support for relevant numerical methods. We also offer a starting point on how to make use of the image space $I_{a+}^{\alpha}[W^{m,p}(\Omega)]$ or $I_{b-}^{\alpha}[W^{m,p}(\Omega)]$ when numerically solving fractional equations. In our future work, we shall further apply the spaces into numerical methods for solving different kinds of specific non-local problems.


\Acknowledgements{The authors thank Professor Xiao-Bing Feng for the discussions, and thank Professor Pingwen Zhang for the academic support. The first author was supported by National Natural Science Foundation of China (Grant No. 11801448), by the Natural Science Basic Research Plan in Shaanxi Province of China under Grant 2018JQ1022. The second author was supported by National Natural Science Foundation of China (Grant No. 11271173).}





\end{document}